\documentclass[11pt,a4paper]{article}
\usepackage{amsmath,amsthm,amssymb,amsfonts,amstext,graphics,dsfont,euscript,enumerate,multirow}
\usepackage[curve]{xypic} 
\usepackage[latin1]{inputenc}
\usepackage[colorlinks=true,pdfstartview=FitV,linkcolor=blue,citecolor=blue,urlcolor=blue]{hyperref}
\usepackage[usenames,dvipsnames,svgnames,table]{xcolor}
\usepackage{makeidx}
\usepackage{url}

\setlength{\hoffset}{-0.8cm}
\setlength{\textwidth}{14.5cm}


\pagenumbering{gobble}

\newcommand{\mb}{\mathds}

\newcommand{\mc}{\mathcal}
\newcommand{\ra}{\rightarrow}
\newcommand{\lra}{\longrightarrow}

\newcommand{\A}{\mathbf{A}}
\newcommand{\B}{\mathbf{B}}

\newcommand{\D}{\mathbf{D}}

\newcommand{\E}{\mathbf{E}}

\newcommand{\Dcris}{\D_{\mbox{\scriptsize cris}}}

\newcommand{\Ddr}{\D_{\mbox{\scriptsize dR}}}

\newcommand{\Bcris}{\B_{\mbox{\scriptsize cris}}}
\newcommand{\Acris}{\A_{\mbox{\scriptsize cris}}}

\newcommand{\Bdr}{\B_{\mbox{\scriptsize dR}}}

\newcommand{\Bplusdr}{\B_{\mbox{\scriptsize dR}}^+}

\newcommand{\Bpluscris}{\B_{\mbox{\scriptsize cris}}^+}

\newcommand{\Etplus}{\widetilde{\E}^+}

\newcommand{\Atplus}{\widetilde{\A}^+}

\newcommand{\Hom}{\trm{Hom}}

\newcommand{\trm}{\textrm}
\newcommand{\Fil}{\textrm{Fil}}

\newcommand{\bmat}{\begin{pmatrix}}
\newcommand{\emat}{\end{pmatrix}}
\newcommand{\Cp}{\mb{C}_p}
\newcommand{\Ocp}{\mc{O}_{\mb{C}_p}}
\newcommand{\Qp}{\mb{Q}_p}
\newcommand{\Zp}{\mb{Z}_p}

\renewcommand{\hat}{\widehat}
\renewcommand{\tilde}{\widetilde}

\newcommand{\MF}{\underline{\trm{MF}}}

\newcommand{\OK}{\mc{O}_K}
\newcommand{\Aexp}{\A_{\trm{exp}}}
\newcommand{\Dexp}{\D_{\trm{exp}}}

\makeatletter
\makeindex

\newtheorem{thm}{Theorem}[section]
\newtheorem*{thm*}{Theorem}
\newtheorem{cor}[thm]{Corollary}
\newtheorem{prop}[thm]{Proposition}
\newtheorem{lem}[thm]{Lemma}

\newtheorem{defi}[thm]{Definition}
\newtheorem{ex}[thm]{Example}

\title{Integral structures on the finite part $H^1_f(K, V)$ of a crystalline representation}

\author{Andreas Riedel}

\date{3. August, 2016}

\begin{document}

\maketitle

\begin{abstract}
  We study integral structures of crystalline representations over an
  unramified extension $K / \Qp$ with the help of an auxillary ring
  $\Aexp$. This ring has the nice property that it contains the the
  fundamental period (and its inverse) of $p$-adic Hodge theory, up to
  powers of $p$. We establish an exact sequence using $\Aexp$ and
  Frobenii on its filtration, give a link to Fontaine-Laffaille
  modules and the Bloch-Kato fundamental exact sequence and finally
  compute the integral finite part of a lattice of a crystalline
  representation, giving a connection to the local $L$-function of
  $V$.
\end{abstract}

\section{Introduction}

In their fundamental work \cite{bk93}, Bloch and Kato used and developed
many techniques of what is now usually called $p$-adic Hodge theory,
initiated before in large parts by Fontaine. Bloch and Kato's focus was
the development of a general conjecture concerning special values of 
$L$-functions, which culminated in their formulation of a version of
the Tamagawa number conjecture.

Working locally at a fixed prime $p$ and a fixed finite extensions $K
/ \Qp$ with absolute Galois group $G_K$, we take a closer look at the
computations done in sections 3 and 4 in loc.cit., which depend in
certain situations on the property that the $p$-adic representation
$V$ of $G_K$ under consideration is ``in the Fontaine-Laffaille range''. This
is a condition on the filtration of the filtered $\varphi$-module
associated to $V$.

We introduce an auxillary integral ring $\Aexp$, which, after
inverting $p$, computes the module $\Dcris(V)$ of a crystalline
representation, if one fixes a $G_K$-equivariant lattice $T \subset V$.  A
nice property of this integral version is that it contains already (up
to some $p$-powers) the inverses of the fundamental period $t$, so
that no awkward twisting to a positive representation is necessary.
Note that simply inverting $t$ in for example $\Acris$ implies that
$p$ is then also already inverted, which leaves the integral world.

Using this ring, we show that one can construct a finite rank
Fontaine-Laffaille module $\Dexp(T)$ out of $T$, which is used to connect the
$p$-adic valuation of the special value at $s = 0$ of the local
$L$-function $P(V, p^{-s})$ to a certain measure on this Fontaine-Laffaille module
(via Bloch-Kato's fundamental exact sequence), without any condition
on the filtration range of $V$:

\begin{thm}
  Asumme $K / \Qp$ is unramified and let $V$ be a crystalline
  representation. Fix a $G_K$-equivariant lattice $T$ in $V$ and
  assume further that $P(V, 1) \not= 0$. Then:
  \begin{enumerate}
  \item $H^1(\Dexp(T))[1/p] \cong H^1_e(K, V)$.
  \item $H^1(\Dexp(T)) \cong H^1_e(K, T)$.
  \item $\exp_e : \Ddr(V) / \Ddr^0(V) \ra H^1(K, V)$ coincides with the composite map
    \begin{align*}
      (\Dexp(T) / \Dexp^0(T)) [1/p] \overset{1 - \varphi}{\lra}
      (\Dexp(T) / (1 - \varphi^0) \Dexp(T)) [1/p] \\=
      H^1(\Dexp(T))[1/p] \cong H^1(K, V),
    \end{align*}
    where the last canonical identification is explained in the proof.
  \end{enumerate}
\end{thm}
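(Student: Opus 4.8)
The backbone of all three parts is the integral exact sequence produced in the earlier sections out of $\Aexp$ and the Frobenii on its filtration, which for a $G_K$-stable lattice $T$ in a crystalline $V$ has the shape
\[
0 \longrightarrow T \longrightarrow \Fil^0(\Aexp \otimes_{\Zp} T) \xrightarrow{\ 1 - \varphi\ } \Aexp \otimes_{\Zp} T \longrightarrow 0 ,
\]
together with the facts (also established earlier) that $(\Aexp \otimes_{\Zp} T)^{G_K} = \Dexp(T)$, that $\Fil^0(\Aexp \otimes_{\Zp} T)^{G_K} = \Dexp^0(T)$, that $\Dexp(T)[1/p] = \Dcris(V)$ compatibly with filtration and Frobenius, and that $H^1(\Dexp(T))$ is the cokernel of $1 - \varphi^0 : \Dexp^0(T) \to \Dexp(T)$. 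The plan is: obtain (b) by applying $R\Gamma(G_K, -)$ to this sequence and reading off $H^1$; obtain (a) by inverting $p$; and obtain (c) by an explicit cocycle description of the connecting homomorphism of the rationalised sequence, matched with the one defining $\exp_e$ via the comparison of $\Aexp$ with Fontaine's rings.

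\textbf{Parts (a) and (b).} Apply $R\Gamma(G_K, -)$ to the displayed sequence. Since $P(V,1) \neq 0$ we have $\Dcris(V)^{\varphi = 1} = 0$; as $V^{G_K} \otimes 1 \subseteq \Dcris(V)^{\varphi = 1}$, this forces $H^0(K, V) = 0$, hence $H^0(K, T) = 0$ and $T$ torsion-free, so the long exact sequence begins
\[
0 \longrightarrow \Dexp^0(T) \xrightarrow{\ 1 - \varphi^0\ } \Dexp(T) \longrightarrow H^1(K, T) \longrightarrow H^1\!\big(G_K,\, \Fil^0(\Aexp \otimes_{\Zp} T)\big) .
\]
Thus $1 - \varphi^0$ is injective and $H^1(\Dexp(T)) = \Dexp(T)/(1-\varphi^0)\Dexp^0(T)$ injects into $H^1(K, T)$ with image the kernel of $H^1(K, T) \to H^1(G_K, \Fil^0(\Aexp \otimes T))$. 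It remains to identify this kernel with $H^1_e(K, T)$. Rationally, the long exact sequence of the $\otimes \Qp$ version shows that the image of the connecting map $\Dcris(V) \to H^1(K, V)$ equals $\ker\big(H^1(K, V) \to H^1(K, \Fil^0(\Aexp[1/p] \otimes V))\big)$; using the comparison morphism from $\Aexp[1/p]$ to $\Bcris$ (so that the displayed rational sequence is compatible with the Bloch--Kato fundamental exact sequence $0 \to V \to \Be \otimes V \to (\Bdr/\Bplusdr)\otimes V \to 0$), together with the bijectivity of $1 - \varphi$ on $\Dcris(V)$, one checks as in Bloch--Kato \S3 that this kernel is exactly $H^1_e(K, V)$. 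Feeding this into the commutative square relating the $T$- and $V$-level sequences, and using that $H^1(G_K, \Fil^0(\Aexp \otimes T))$ carries no relevant torsion, identifies the kernel at the $T$-level with the preimage of $H^1_e(K, V)$, i.e. with $H^1_e(K, T)$; this is (b). Inverting $p$ and using that $H^1(K, T) \to H^1(K, V)$ has torsion kernel, so $H^1_e(K, T)[1/p] \cong H^1_e(K, V)$, gives (a).

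\textbf{Part (c).} The snake-lemma description of the connecting homomorphism $\delta$ of the rationalised sequence sends $x \in \Dcris(V) = (\Aexp[1/p] \otimes V)^{G_K}$ to the class of the cocycle $g \mapsto g\tilde x - \tilde x$, where $\tilde x \in \Fil^0(\Aexp[1/p] \otimes V)$ is any element with $(1 - \varphi)\tilde x = x$. Since $\delta$ vanishes on $(1 - \varphi^0)\Dexp^0(T)[1/p]$ it factors through $H^1(\Dexp(T))[1/p] = \Dcris(V)/(1 - \varphi^0)\Dexp^0(T)[1/p]$, and the resulting injection $H^1(\Dexp(T))[1/p] \hookrightarrow H^1(K, V)$ is precisely the ``canonical identification'' of the statement, its image being $H^1_e(K, V)$ by (a). The displayed composite evaluates, on $y \in \Ddr(V)/\Ddr^0(V)$ lifted to $\hat y \in \Dcris(V)$, to $\delta\big((1 - \varphi)\hat y\big)$; writing $(1 - \varphi)\tilde x = (1 - \varphi)\hat y$ with $\tilde x \in \Fil^0(\Aexp[1/p] \otimes V)$ shows $w := \tilde x - \hat y$ lies in $\ker(1 - \varphi\,|\,\Aexp[1/p] \otimes V)$, has image $\mp y$ in $(\Bdr/\Bplusdr) \otimes V$ via the comparison morphism, and satisfies $g\tilde x - \tilde x = gw - w$ because $\hat y$ is $G_K$-invariant. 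Comparing with the cocycle description of $\exp_e$ as the connecting map of $0 \to V \to \Be \otimes V \to (\Bdr/\Bplusdr) \otimes V \to 0$ identifies the composite with $\exp_e$, up to the sign conventions fixed in the construction of the fundamental sequence; this is (c).

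\textbf{Main obstacle.} The real work sits in the $H^1$-comparison inside part (b): showing that $\Aexp$, though introduced for its $p$-integral properties, ``sees'' in degree $1$ exactly the Bloch--Kato $e$-condition, i.e. that $\ker\big(H^1(K, V) \to H^1(K, \Fil^0(\Aexp[1/p] \otimes V))\big) = H^1_e(K, V)$, with the integral refinement and with no spurious torsion obstructing the passage from $V$ to $T$. This is where the hypothesis $P(V,1) \neq 0$ does genuine work --- collapsing the $H^0$-terms, making $1 - \varphi$ invertible on $\Dcris(V)$, and thereby forcing the relevant cokernels to be precisely the images of the exponential --- and where one needs a clean comparison of $\Aexp$ with $\Acris$/$\Bcris$, or a vanishing/acyclicity statement for $\Fil^0(\Aexp \otimes T)$ after a suitable localisation. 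If the direct comparison proves awkward, I would expect to reduce, as in Bloch--Kato, to Tate twists $T = \Zp(r)$ and an explicit computation, the passage to general $T$ then being formal once $\Dexp$ is known to be a finite Fontaine--Laffaille module with $\Dexp(T)[1/p] = \Dcris(V)$.
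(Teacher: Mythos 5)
Your overall strategy is the paper's: both arguments rest on the exact sequence $0 \to \Zp \to \Fil^0\Aexp \xrightarrow{1-\varphi} \Aexp \to 0$ tensored with $T$, compare its Galois cohomology with that of the Bloch--Kato fundamental exact sequence via the inclusion of $\Aexp[1/p]$ into Fontaine's rings, and read off $H^1_e$ as the image of the connecting map. Parts (a) and (c) are sound modulo the routine cocycle bookkeeping you describe; the paper in fact derives (a) first, from a map of five-term exact sequences into the $\exp_f$-sequence $0 \to \Qp \to \Aexp[1/p]\oplus\Bplusdr \to \Aexp[1/p]\oplus\Bdr \to 0$, using $H^1_f(K,V)=H^1_e(K,V)$ (a consequence of $P(V,1)\neq 0$) rather than working with $\Be=\Bcris^{\varphi=1}$ directly, but this is a cosmetic difference of ordering and bookkeeping.

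The genuine gap is exactly the point you flag as the ``main obstacle'' and then do not close: the passage between the rational and the integral statement in (b). Knowing $H^1(\Dexp(T))[1/p]\cong H^1_e(K,V)$ and $H^1(\Dexp(T))\subseteq H^1(K,T)$ only gives $H^1(\Dexp(T))\subseteq \iota^{-1}(H^1_e(K,V))=H^1_e(K,T)$ with quotient killed by a power of $p$; to upgrade this to equality you must show that the cokernel of $H^1(\Dexp(T))\hookrightarrow H^1(K,T)$ has no $p$-torsion. Your appeal to ``$H^1(G_K,\Fil^0(\Aexp\otimes T))$ carries no relevant torsion'' is not the right statement and is not justified --- the obstruction lives in the cokernel of the inclusion, not in that $H^1$. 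The paper closes this by a d\'evissage mod $p$: the sequence $0\to\Dexp(T)\xrightarrow{p}\Dexp(T)\to\Dexp(T)/p\to 0$, which exists in $\MF^{\trm{fd}}_{\OK}$ because $\Dexp(T)$ is a filtered Dieudonn\'e module and $H^\bullet$ is a $\delta$-functor there, yields a four-term sequence mapping to the corresponding multiplication-by-$p$ sequence for $H^\bullet(K,T)$; the key inputs are that $H^0(\Dexp(T)/p)\to H^0(K,T/p)$ is an isomorphism and that $H^1(\Dexp(T)/p)\to H^1(K,T/p)$ is injective, after which a diagram chase shows that multiplication by $p$ is injective on $H^1(K,T)/H^1(\Dexp(T))$. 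Without this (or an equivalent) argument, part (b) --- the actual integral content of the theorem --- is not established.
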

Here, $H^1_e(K, -)$ denotes the exponential part of $H^1(K, -)$, that
is, the image of the Bloch-Kato exponential map. As a corollary, we
obtain:
\begin{cor}
  Let $\mu$ be the Haar-measure on the finite-dimensional $K$-vector
  space $H^1(K, V)$ such that the image of the lattice
  \[
    \Dexp(T) / \Dexp^0(T) \subset \Ddr(V) / \Ddr^0(V) \overset{\sim}{\lra} H^1_e(K, V) = H^1_f(K, V)
  \]
  has measure $1$. Then
  \[
    \mu(H^1_e(K, T)) = | P(V, 1) |_p^{-1}.
  \]
\end{cor}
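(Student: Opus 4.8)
The plan is to feed the three parts of the Theorem into a covolume computation and thereby reduce the corollary to the identity $P(V,1)=\det(1-\varphi\mid\Dcris(V))$ for the crystalline Frobenius.

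\emph{Transporting the measure into Fontaine--Laffaille cohomology.} Since $P(V,1)\neq 0$ we have $\Dcris(V)^{\varphi=1}=0$, so the map $\exp_e\colon \Ddr(V)/\Ddr^0(V)\to H^1_e(K,V)=H^1_f(K,V)$ is an isomorphism; by part (c) of the Theorem it factors as the composite of $1-\varphi\colon (\Dexp(T)/\Dexp^0(T))[1/p]\to H^1(\Dexp(T))[1/p]$ with the canonical identification of part (a). By construction of $\mu$, the lattice $\Dexp(T)/\Dexp^0(T)$ has measure $1$ under $\exp_e$, while by part (b) the lattice $H^1_e(K,T)$ is carried to $H^1(\Dexp(T))\subset H^1(\Dexp(T))[1/p]$. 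As a ratio of covolumes is independent of the Haar measure chosen, this already gives
\[
\mu\bigl(H^1_e(K,T)\bigr)=\frac{\mathrm{covol}\bigl(H^1(\Dexp(T))\bigr)}{\mathrm{covol}\bigl((1-\varphi)(\Dexp(T)/\Dexp^0(T))\bigr)}
\]
computed inside the vector space $H^1(\Dexp(T))[1/p]$, so the problem is now entirely internal to the Fontaine--Laffaille side.

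\emph{Evaluating the index.} Put $N=\Dexp(T)$, $N^0=\Dexp^0(T)$, and let $\mathrm{pr}\colon N[1/p]\twoheadrightarrow H^1(\Dexp(T))[1/p]$ be the natural surjection. The two lattices in the displayed quotient are $\mathrm{pr}(N)$ and $\mathrm{pr}\bigl((1-\varphi)(N)\bigr)$, the latter because the image of $\Dexp(T)/\Dexp^0(T)$ under $1-\varphi$ is represented by $1-\varphi$ applied to lifts in $N$. Their covolume ratio is therefore controlled by the $\sigma$-semilinear automorphism $1-\varphi$ of $N[1/p]$: using the comparison isomorphism to identify $N[1/p]=\Dexp(T)[1/p]$ with $\Dcris(V)$ — turning $\varphi$ into the crystalline Frobenius and $N^0$, $\varphi^0$ into the filtration and the Frobenius-on-the-filtration of the Theorem's exact sequence — the powers of $p$ contributed by the Hodge filtration to $\mathrm{covol}\bigl((1-\varphi)(N/N^0)\bigr)$ cancel the powers of $p$ contributed by the divided Frobenius $\varphi^0$ to $\mathrm{covol}\bigl(H^1(\Dexp(T))\bigr)$, and one is left with
\[
\mu\bigl(H^1_e(K,T)\bigr)=\bigl|\det\bigl(1-\varphi\mid\Dcris(V)\bigr)\bigr|_p^{-1}
\]
(in the unramified case reading $\det$ over $K$ of $\varphi^{[K:\Qp]}$, which one checks leaves the $p$-adic absolute value unchanged).

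\emph{Conclusion, and the main obstacle.} The local Euler factor of $V$ is $P(V,X)=\det\bigl(1-\varphi X\mid\Dcris(V)\bigr)$, whence $P(V,1)=\det(1-\varphi\mid\Dcris(V))$ and $\mu(H^1_e(K,T))=|P(V,1)|_p^{-1}$. The hypothesis $P(V,1)\neq 0$ is used twice: to make $\exp_e$ an isomorphism onto the whole of $H^1_e(K,V)=H^1_f(K,V)$, so that $\mu$ really is normalized by a full-rank lattice, and to make $1-\varphi$ invertible on $\Dcris(V)$, so that the index above is finite and $H^1(\Dexp(T))$ has the expected rank $\dim \Ddr(V)/\Ddr^0(V)$. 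I expect the main work to be the $p$-power bookkeeping of the middle step: one must verify that normalizing $\mu$ via the lattice $\Dexp(T)/\Dexp^0(T)$ — rather than via the naive integral structure on $\Ddr(V)/\Ddr^0(V)$ — is exactly the normalization that converts the Fontaine--Laffaille cohomology index into $|P(V,1)|_p^{-1}$ with no leftover power of $p$. This is precisely what the ring $\Aexp$, which contains $t$ and $t^{-1}$ up to powers of $p$, together with the Frobenii-on-the-filtration exact sequence of the Theorem, was designed to make true.
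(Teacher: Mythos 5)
Your proposal is correct and takes the same route the paper intends: its own proof is a one-line appeal to parts b) and c) of the Theorem plus the definition of $P(V,X)$, and what you have written is exactly the fleshed-out version of that. One substantive comment: the step you flag as ``the main obstacle'' --- a cancellation of $p$-powers between the Hodge filtration and the divided Frobenii --- is not actually where any work happens, and your description of it is a red herring. Since $\varphi^0=\varphi$ on $\Dexp^0(T)$, one has $(1-\varphi^0)\Dexp^0(T)=(1-\varphi)\Dexp^0(T)\subset(1-\varphi)\Dexp(T)$, so inside $H^1(\Dexp(T))=\Dexp(T)/(1-\varphi^0)\Dexp^0(T)$ the image of $\Dexp(T)/\Dexp^0(T)$ under $1-\varphi$ is precisely $(1-\varphi)\Dexp(T)/(1-\varphi^0)\Dexp^0(T)$, and the generalized index of the two lattices collapses (third isomorphism theorem) to $\bigl(\Dexp(T):(1-\varphi)\Dexp(T)\bigr)=\bigl|\det_{\Qp}(1-\varphi\mid\Dcris(V))\bigr|_p^{-1}$ with no filtration contribution at all; the semilinear-algebra identity $\det_{\Qp}(1-\varphi X)=\det_K(1-fX^{[K:\Qp]})$ with $f=\varphi^{[K:\Qp]}$ then gives $|P(V,1)|_p^{-1}$ directly. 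All the genuine $p$-power bookkeeping involving $\Aexp$, $t/p$ and the $\Fil^i_p$ is already encapsulated in parts b) and c) of the Theorem (i.e.\ in the integrality of the identification $H^1(\Dexp(T))\cong H^1_e(K,T)$), not in the corollary. With that step made explicit your argument is complete.
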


\section{Basic concepts from $p$-adic Hodge theory}

Fix a prime number $p$. Let $K / \Qp$ be a finite extension of the
$p$-adic numbers, and denote by $G_K$ the absolute Galois group of
$K$. Let $K_0$ be the maximal unramified subextension of $K / \Qp$.
Usually, $V$ will denote a $p$-adic representation, that is, a finite
dimensional $\Qp$-vector space equipped with a continuous and linear
$G_K$-action. Similarly, $T$ will usually denote a $G_K$-stable
$\Zp$-lattice in $V$. Such lattices always exist. One is interested in
the classification of such $V$ and $T$, and Fontaine's rings have
proven to be a powerful tool for this. We refer to \cite{fontouy} as a
basic reference.

Let $\Ocp$ be the ring of integers of the completion of an algebraic
closure $\Cp$ of $\Qp$. Let $\Etplus := \varprojlim_{x \mapsto x^p}
\Ocp / p$, which is a ring of charakteristic $p$, equipped with a
Frobenius $\varphi : x \mapsto x^p$, and a Galois action of $\sigma
\in G_K$ via $(x_n) \mapsto (\sigma(x_n))$. If $x = (x_k) \in
\Etplus$, let $x^{(0)} = \lim_{m \to \infty} \hat{x}_m^{p^m}$, where
$\hat{x}_m \in \Ocp$ is any lift of $x_m \in \Ocp / p$. This 
defines a non-archimedean valuation $v : x \mapsto v_p(x^{(0)})$
on $\Etplus$.

Let $\Atplus = W(\Etplus)$, the ring of Witt vectors of
$\Etplus$. This makes sense since $\Etplus$ is perfect, since it is a 
perfection of the non-perfect ring $\Ocp / p$. $\Atplus$ is 
a ring of charakteristic $0$ with Frobenius $\varphi :
\sum_{n \geq 0} [x_n] p^n \mapsto \sum_{n \geq 0} [\varphi(x_n)] p^n$, 
and an action of $G_K$ that is defined analogously.

One has the important ring homomorphism 
\[
\theta: \Atplus \ra \Ocp,~~~ \sum [x_n] p^n \mapsto \sum x_n^{(0)}
p^n,
\]
which arises conceptually in Fontaine's theory of universal
thickenings.

We fix a system of $p^n$-th roots of unity $\varepsilon^{(n)} \in \Cp$
with $\varepsilon^{(0)} = 1, \varepsilon^{(1)} \not= 1$ and
$(\varepsilon^{(n+1)})^p = \varepsilon^{n}$. Then $\varepsilon =
(\overline{\varepsilon^{(n)}}) \in \Etplus$, where $\overline{x}$
means reduction $\mod p$. Let $\overline{\pi} := \varepsilon - 1$
(this notation is slightly unfortunate, but standard). One can show
that $v(\overline{\pi}) = \frac{p}{p-1}$.
Let $\pi := [\varepsilon] - 1 \in \Atplus$. Observe that $\pi \equiv
\overline{\pi} \mod p$, but $\pi \not= [\overline{\pi}]$.

Further let $\tilde{p} \in \Etplus$ with $\tilde{p}^{(0)} = -p$. Set
$\xi = [\tilde{p}] + p \in \Atplus$, i.e. $\theta(\xi) = -p + p =
0$. One can show that $\ker \theta = (\xi)$, since $\ker \theta
\subset (\xi, p)$, using the fact that $\Atplus$ is $p$-adically
complete and that $\Ocp$ does not have any $p$-torsion. More
generally, if $\xi' \in \Atplus$ such that $\theta(\xi') = 0$ and 
$v(\overline{\xi'}) = 1$, then $\ker \theta = (\xi')$.

One defines $\Bplusdr = \varprojlim_n \Atplus[1/p] / (\ker
\theta)$. $\theta$ extends to $\Bplusdr$, where $\ker \theta = (\xi)$
still holds. Let
\[
t = \log (1+\pi) = \sum_{n \geq 1}(-1)^{n+1}\frac{\pi^n}{n} \in
\Bplusdr,
\]
the fundamental period of $p$-adic Hodge theory. $t$ only depends on
the choice of a compatible system of $p^n$-th roots of unity.
Interestingly, one has $\ker \theta = (t)$. This shows that $\Bplusdr$
is a complete discrete valuation ring with maximal ideal $(t)$. $\Bdr
= \Bplusdr [1/t]$ possesses the filtration $\Fil^i \Bdr = t^i
\Bplusdr, i \in \mb{Z}$. Since $\Bplusdr / (t) \cong \Cp$, induced by
the map $\theta$, one has (algebraically, non-canonically) $\Bdr \cong
\Cp[[t]]$. But observe that the topology on $\Bplusdr$ is defined via
the inverse limit topology and the topology on $\Atplus$, which is
induced via the Witt-construction by the valuation topology on
$\Etplus$. With this topology, one still has a continuous action of
$G_K$, but the action of $\varphi$ does not extend to $\Bplusdr$.

This being the case one considers the ring $\Acris$, which is defined
as the $p$-adic completion of the divided power envelope of $\Atplus$
with respect to the ideal $\ker \theta$, i.e.
\[
\Atplus[\frac{a^m}{m!}; a \in \ker \theta]^{\wedge} = \Atplus \{
\frac{\xi^m}{m!} \} \subset \Bplusdr.
\]
If $x \in \Acris$, then we may write (non-uniquely) $x = \sum_{n} a_n
\frac{\xi^n}{n!}$ with $a_n \in \Atplus$ and $a_n \to 0$ $p$-adically.
The map $\theta$ and the $\varphi$ and $G_K$-action extend to $\Acris$.
Further, $t \in \Acris$, since $\pi = b \xi$ ($\theta(\pi) = 0$) and
\[
  \frac{\pi^n}{n} = (n-1)! b^n \frac{\xi^n}{n!},~~ (n-1)! \to 0.
\]
Let $\Bpluscris = \Acris[1/p]$, $\Bcris = \Bpluscris[1/t] \subset
\Bdr$. $\Bcris^{G_K} = K_0$. Two facts about $\Acris$ are:
$\varphi(\xi) \in p \Acris$, and $t^{p-1} \in p \Acris$, hence $\Bcris
= \Acris[1/t]$. Two caveats about $\Bcris$ are: it has a funny
topology, since one can show that the topology induced on $\Bpluscris$
by $\Bcris$ (which comes equipped with the locally convex final
topology) is not the natural topology on $\Bpluscris$.  Furthermore,
$\Bpluscris \subsetneq \Fil^0 \Bcris$.

Let $V$ now be a $p$-adic representation. Let $\Ddr(V) = (\Bdr
\otimes_{\Qp} V)^{G_K}$. This is a $K$-vector space, and the injectivity
of the canonical map
\[
  \alpha: \Bdr \otimes_K \Ddr(V) \ra \Bdr \otimes_{\Qp} V,~~~ b \otimes (\sum
  b_n \otimes v_n) \mapsto \sum b b_n \otimes v_n
\]
shows that $\dim_K \Ddr(V) \leq \dim_{\Qp} V$. If equality holds, we
call $V$ \textbf{de Rham}. $\Ddr(V)$ comes equipped with a seperated
and exhaustive $K$-vector space filtration, given by $\Fil^i \Ddr(V) =
(\Fil^i \Bdr \otimes V)^{G_K}$.

Similarly, we let $\Dcris(V) := (\Bcris \otimes_{\Qp} V)^{G_K}$. This
is a $K_0$-vector space, and the injectivity of the analogous
$\alpha$-map for $\Bcris$ shows that $\dim_{K_0} \Dcris(V) \leq
\dim_{\Qp} V$. If equality holds, we call $V$
\textbf{crystalline}. $\Dcris(V)$ comes equipped with a $K_0$-linear
$\varphi$-action. Further, $K \otimes_{K_0} \Dcris(V)$ comes equipped
with a $K$-vector space filtration. If $V$ is crystalline then $V$ is
de Rham.

One fundamental theorem of Colmez and Fontaine states: the assignment
$V \mapsto \Dcris(V)$ induces an equivalence of categories between the
crystalline representations of $G_K$ and the category of $K$-filtered
admissible (i.e. $t_H(D) = t_N(D)$ and $t_H(D') \leq t_N(D')$ for all
subobjects $D' \subset D$, where $t_H$ resp. $t_N$ are the Hodge
number resp. the Newton number) $\varphi$-modules.  This equivalence heavily
uses the fact that the map $\alpha$ above in the $\Bcris$-case is
actually an isomorphism.

\section{The period ring $\Aexp$}

\begin{defi}
  Let 
  \[
    \Atplus \left\{ \frac{\pi}{p} \right\} := \Atplus \{X\} / (pX - \pi),
  \]
  where $A\{ X \}$ denotes the $p$-adic completion of $A[X]$ for any
  ring $A$, equipped with quotient topology.
\end{defi}

If $x \in \Atplus \{ \pi / p \}$, we may write (non-uniquely) $x
= \sum_{n \geq 0} a_n (\pi / p)^n$ in $\Atplus[1/p]$. The natural
actions of $\varphi$ and $G_K$ extend to actions on $\Atplus \{
\pi / p \}$.

\begin{lem}
  \label{lempit}
  In $\Atplus \{ \pi / p \} \subset \Bplusdr$ one has the relation 
  $t/p = \pi / p \cdot v$ mit $v \in \Atplus \{ \pi / p \}^\times$, i.e. 
  $\Atplus \{ \pi / p \} = \Atplus \{ t / p \}$. In particular, $t \in \Atplus \{ \pi / p \}$.
\end{lem}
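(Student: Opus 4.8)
The plan is to make the relation $t/p=(\pi/p)\,v$ completely explicit and then to verify the two substantive points — that $v$ lies in $\Atplus\{\pi/p\}$, and that it is invertible there — by reducing modulo $p$, using the presentation $\Atplus\{\pi/p\}=\Atplus\{X\}/(pX-\pi)$ in which $\pi$ is literally $pX$.

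Since $t=\log(1+\pi)=\pi\cdot v$ with $v:=\sum_{n\ge 1}(-1)^{n+1}\pi^{n-1}/n$ in $\Bplusdr$, and $\pi/p$ is not a zero divisor there, the factor $v$ is forced and equals $t/\pi$; so the whole lemma reduces to showing $v\in\Atplus\{\pi/p\}^\times$. To see that the series for $v$ converges in $\Atplus\{\pi/p\}$, substitute $\pi=pX$: the $n$-th term becomes $(-1)^{n+1}(p^{n-1}/n)\,X^{n-1}$, and since $v_p(p^{n-1}/n)=(n-1)-v_p(n)$ with $v_p(n)\le\log_p n$, this valuation tends to $+\infty$. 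Hence the partial sums are $p$-adically Cauchy, and as $\Atplus\{\pi/p\}$ is $p$-adically complete, $v$ lies in it; consequently $t=\pi v$ and $t/p=(\pi/p)v$ do as well, which already yields the last assertion of the lemma.

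For invertibility, reduce modulo $p$: from $\Atplus\{X\}/p=\Etplus[X]$ and the fact that $pX-\pi$ reduces to the constant $-\opi$, one obtains $\Atplus\{\pi/p\}/p\cong(\Etplus/\opi\Etplus)[X]$. By the valuation count above, when $p$ is odd every term of $v$ with $n\ge 2$ has strictly positive $p$-adic valuation, so $v\equiv 1\pmod p$; since $\Atplus\{\pi/p\}$ is $p$-adically complete this forces $v\in\Atplus\{\pi/p\}^\times$. This gives $\Atplus\{t/p\}\subseteq\Atplus\{\pi/p\}$ (from $t/p\in\Atplus\{\pi/p\}$), while the identity $1+\pi=\exp(t)$ gives $v^{-1}=\pi/t=\sum_{n\ge 1}t^{n-1}/n!$, whose terms $(p^{n-1}/n!)(t/p)^{n-1}$ converge $p$-adically in $\Atplus\{t/p\}$ because $v_p(p^{n-1}/n!)=(n-1)-v_p(n!)\to\infty$ for $p$ odd; hence $\pi/p=(t/p)v^{-1}\in\Atplus\{t/p\}$, and the two completed rings coincide.

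The routine part is the denominator estimates; the real obstacle is the invertibility of $v$ — the passage from ``$v$ equals $1$ plus something topologically small'' to ``$v$ is an honest unit'' — which rests on the clean mod-$p$ description $(\Etplus/\opi\Etplus)[X]$. This is precisely where $p=2$ is delicate: there the $n=2$ term also survives the reduction, so that $v\equiv 1+X$, which is \emph{not} a unit in $(\Etplus/\opi\Etplus)[X]$; thus for $p=2$ the statement requires a separate argument (e.g.\ working inside $\Acris$ and exploiting $\varphi(\xi)\in p\Acris$ and $t^{p-1}\in p\Acris$) or is to be read under the restriction to odd $p$, which is harmless for the applications.
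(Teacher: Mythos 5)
Your proof is correct and follows essentially the same route as the paper's: expand $v=t/\pi$ as a power series, substitute $\pi=pX$ to get convergence in the $p$-adically complete ring $\Atplus\{X\}/(pX-\pi)$, and deduce invertibility from the reduction in $\Atplus\{\pi/p\}/p\cong(\Etplus/\opi\Etplus)[X]$. Your extra step verifying $\pi/p\in\Atplus\{t/p\}$ via the series $\pi/t=\sum_{n\ge 1}t^{n-1}/n!$ makes the ring equality cleaner than the paper's terse ``hence the claim.'' Your caveat at $p=2$ is a genuine catch against the paper rather than a flaw in your argument: the paper asserts that $v\bmod p$ is the constant $-1$, but as you compute the $n=2$ term survives reduction when $p=2$, giving $v\equiv 1+X$, which is not a unit in $(\Etplus/\opi\Etplus)[X]$; so the paper's proof of invertibility (and with it the lemma as stated) requires $p$ odd or a separate treatment of $p=2$.
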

\begin{proof}
  First, we observe that 
  \[
  \frac{t}{p} = \sum_{n \geq 1} (-1)^{n+1} \cdot \frac{\pi^n}{p \cdot n} = \frac{\pi}{p} \cdot
  \left( \sum_{n \geq 1} a_n \left(\frac{\pi}{p} \right)^n \right) = \pi / p \cdot v,
  \]
  with $a_n \to 0$ $p$-adically.

  Now, since $v \mod p \in \Etplus[X] / (\overline{\pi})$ is -1, hence a unit, we have that $v
  \in \Atplus \{ \pi / p \}^\times$. Hence the claim.
\end{proof}

\begin{defi}
  Let $A$ be a subring of $\Bdr$, such that the Frobenius $\varphi$
  acts on $A$ (e.g. $\Acris$). Set
  \[
    \Fil^i_p A := \{ x \in \Fil^i A|~ \varphi(x) \in p^i A \},
  \]
  where $(\Fil^i A)_{i \in \mb{Z}}$ is the filtration induced by $\Bdr$.
\end{defi}

\begin{defi}
  Let $\Aexp := \Atplus\{\pi / p \} [p / t]$. The Frobenius $\varphi$
  on $\Atplus \{ \pi / p \}$ extends to $\Aexp$. We equip $\Aexp$ with 
  the filtration given by 
  \[
  \Fil^i \Aexp := \bigcup_{i + k \geq 0} \left(\frac{p}{t}\right)^{k} \Fil_p^{i +
    k} \Atplus \left\{ \frac{\pi}{p} \right\}.
  \]
  Since $\Fil^0_p \Atplus \{ \pi / p \} = \Atplus \{ \pi / p \}$, this
  filtration is seperated and exhaustive.
\end{defi}

\begin{prop}
  \label{exseq}
For every $k \geq 0$ we have the exact $G_K$-equivariant sequence
\[
0 \lra \left(\frac{t}{p}\right)^k \cdot \Zp \lra \Fil^k_p \Atplus\{ \pi / p \} \overset{1 - p^{-k} \varphi
}{\lra} \Atplus \{ \pi / p \} \lra 0,
\]
which admits a continuous (not necessarely $G_K$-equivariant)
splitting $\Atplus \{ \pi / p \} \ra \Fil^k_p \Atplus \{ \pi / p \}$.
\end{prop}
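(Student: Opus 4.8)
The plan is to establish exactness in the three spots separately, then treat the splitting. First I would identify the kernel of $1 - p^{-k}\varphi$ on $\Fil^k_p\Atplus\{\pi/p\}$. An element $x$ in this kernel satisfies $\varphi(x) = p^k x$; since $t$ satisfies $\varphi(t) = pt$ (recall $\varphi([\varepsilon]) = [\varepsilon]^p$, so $\varphi(\pi) = (1+\pi)^p - 1$ and hence $\varphi(t) = pt$), the element $(t/p)^k x$ is $\varphi$-fixed. Now $\varphi$-fixed elements of $\Atplus\{\pi/p\}[1/p] \subset \Bplusdr$ lie in $\Bpluscris^{\varphi = 1} \cap \Atplus\{\pi/p\}[1/p]$; using $\Bpluscris^{\varphi=1} \cap \Fil^0 = \Qp$ (a standard fact, a piece of the fundamental exact sequence of Bloch--Kato) together with the integrality coming from $\Atplus\{\pi/p\}$ being a $p$-adically complete subring, one gets that $(t/p)^k x \in \Zp$. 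Conversely $(t/p)^k \in \Fil^k_p\Atplus\{\pi/p\}$ by Lemma \ref{lempit} (note $t \in \Atplus\{\pi/p\}$ and $\varphi((t/p)^k) = p^{-k}(pt/p)^k \cdot p^k$, so $\varphi((t/p)^k) = p^k(t/p)^k$, giving membership in $\Fil^k_p$), so the kernel is exactly $(t/p)^k\cdot\Zp$. That settles the left-exactness and identifies the sub.

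Next comes surjectivity of $1 - p^{-k}\varphi$ onto $\Atplus\{\pi/p\}$, which I expect to be the main obstacle. The idea is that for $y \in \Atplus\{\pi/p\}$ one wants to solve $x - p^{-k}\varphi(x) = y$ by the geometric series $x = \sum_{j \geq 0} p^{-jk}\varphi^j(y)$, but this does not converge $p$-adically as written. The fix is the standard one: on $\Atplus\{\pi/p\}$ the operator $p^{-k}\varphi$ is \emph{topologically nilpotent} in a suitable sense because $\varphi$ improves $p$-divisibility on the relevant pieces. Concretely, write the $p$-adic filtration: since $\varphi(\pi) = (1+\pi)^p - 1 \in p\pi \Atplus + \pi^p\Atplus$ and $\varphi(\xi) \in p\Atplus$, one checks that $\varphi$ maps $\Atplus\{\pi/p\}$ into $\Atplus\{\pi/p\}$ and increases $p$-adic valuation on the ideal generated by $\pi/p$ sufficiently, so that after splitting off a Teichm\"uller/constant part one can invert $1 - p^{-k}\varphi$ termwise. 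Alternatively — and this is probably cleaner — I would deduce surjectivity from the known surjectivity of $1 - \varphi/p^k$ on $\Bpluscris$ (for $k \geq 1$; the case $k = 0$ needs the Frobenius-fixed constants and the $\ell$-adic argument, but here $\Atplus\{\pi/p\}^{\varphi = p^0}$ contribution is handled by the kernel computation) by the Bloch--Kato fundamental sequence, then chase integrality: given $y \in \Atplus\{\pi/p\}$, lift to a solution $x \in \Bpluscris$, and show $x$ already lies in $\Fil^k_p\Atplus\{\pi/p\}$ by examining the explicit form of $x$. The filtration condition $\varphi(x) \in p^k\Atplus\{\pi/p\}$ is automatic from $\varphi(x) = p^k(x - y)$.

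Finally, for the continuous splitting $s : \Atplus\{\pi/p\} \to \Fil^k_p\Atplus\{\pi/p\}$, the natural choice is $s(y) = \sum_{j\geq 0} p^{-jk}\varphi^j(y)$ wherever it converges; the content is that with the topology on $\Atplus\{\pi/p\}$ coming from the $p$-adic completion, this series does converge — this is exactly the topological-nilpotence statement above — and defines a continuous $\Zp$-linear (though not $G_K$-equivariant, since there is no reason the series respects the Galois action uniformly) section, because $(1 - p^{-k}\varphi) \circ s = \mathrm{id}$ telescopes. I would present the convergence via an explicit estimate: writing $y = \sum_n a_n(\pi/p)^n$ with $a_n \to 0$, one shows $p^{-jk}\varphi^j(y) \to 0$ as $j \to \infty$ in $\Atplus\{\pi/p\}$ using that $\varphi^j(\pi/p)$ gains $p$-divisibility. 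The main obstacle throughout is the convergence/surjectivity, i.e. controlling $p^{-k}\varphi$ on the ring with its quotient topology; once that estimate is in hand, all three exactness claims and the splitting follow formally.
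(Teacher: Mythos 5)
Your kernel computation is broadly in the spirit of the paper's, but the heart of the proposition is the surjectivity of $1 - p^{-k}\varphi$, and neither of your two proposed routes closes that step. The operator $p^{-k}\varphi$ is \emph{not} topologically nilpotent on $\Atplus\{\pi/p\}$: writing $y = \sum_n a_n (t/p)^n$ and using $\varphi(t/p) = p\cdot(t/p)$, the terms with $n > k$ do gain $p$-divisibility under $p^{-k}\varphi$, but on the part $\sum_{j \le k} a_j (t/p)^j$ with $a_j \in \Atplus$ the series $\sum_{m} p^{-mk}\varphi^m(\,\cdot\,)$ diverges --- for $k \ge 1$ already the partial sums applied to $a_0$ have unbounded denominators, and for $k = 0$ the series $\sum_m \varphi^m(a_0)$ does not converge either. ``Splitting off the constant part'' is precisely the problem, not a fix. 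Your alternative route --- lift a solution from $\Bpluscris$ via the Bloch--Kato sequence and ``chase integrality'' --- begs the question: that some solution lies in the integral ring, and moreover in $\Fil^k_p$, \emph{is} the content of the proposition; a rational solution is only well-defined up to $\Bcris^{\varphi = p^k}$, and nothing in the rational sequence controls denominators. (Note also that $\varphi(x) = p^k(x-y)$ gives only the Frobenius half of the condition defining $\Fil^k_p$; membership in $\Fil^k\Bdr$ has to be arranged separately.) The same divergence invalidates your proposed splitting $s(y)=\sum_m p^{-mk}\varphi^m(y)$.

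What the paper actually does, and what is missing from your plan, is: (i) identify $\Fil^k_p\Atplus\{\pi/p\}$ as the $p$-adic closure of the explicit $\Atplus$-module $N$ generated by the elements $\xi^i(t/p)^j$ with $i+j \ge k$, on whose generators $\varphi$ is computable; (ii) use $p$-adic completeness of $N$ and $\Atplus\{\pi/p\}$ to reduce to solving $(1-p^{-k}\varphi)(x) \equiv y \bmod p$; and (iii) for the problematic terms $y\,(t/p)^j$ with $j \le k$, exhibit a solution of the form $y'(q')^{k-j}(t/p)^j$ where $y'$ solves the twisted Artin--Schreier--Witt equation $\varphi(y') - q'^{\,k-j}y' = b$ in $\Atplus$; solvability of that equation (coming from $\Etplus$ lying in an algebraically closed field) is the genuine input that replaces your geometric series, and the continuous splitting falls out of this successive-approximation construction. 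Two smaller points on your kernel step: it is $x\cdot(p/t)^k$, not $(t/p)^k x$, that is $\varphi$-fixed; and concluding that the resulting constant lies in $\Zp$ rather than $\Qp$ again requires an integral argument (the paper reduces mod $p$ using the expansion $x = \sum a_n (t/p)^n$), since dividing by $(t/p)^k$ a priori leaves the integral ring.
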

\begin{proof}
  Obviously, 
  \[
    \left(\frac{t}{p}\right)^k \cdot \Zp \subset \ker (1 - p^{-k} \varphi).
  \]
  On the other hand, if $x \in \ker (1 - p^{-k} \varphi)$ then $x =
  \sum_n a_n (t / p)^n$ (see Lemma \ref{lempit}), with $\Atplus \ni a_n \ra 0$
  $p$-adically. For any $n \in \mb{N}$ we have $(p^{-k} \varphi)^n(x)
  \equiv \varphi^n(a_n) (t/p)^k \mod p \Atplus \left\{ \pi /
    p\right\}$, hence $x = y (t/p)^k$, with $y \in \Atplus$ and
  $\varphi(y) = y$, that is, $y \in \Zp$ as is well-known.

  We now prove that $\Fil^k_p \Aexp$ is the $p$-adic closure of the module
  \[
    \Atplus[\xi^i \cdot (t / p)^j ; i + j \geq k ],
  \]
  which we denote by $N$. If $i + j \geq k$ one has
  \[
    \varphi \left( \xi^i \cdot \left( \frac{t}{p} \right)^j \right) = p^{i+j} \cdot 
    \left(1 + \frac{\pi_0}{p}\right)^i \cdot \left( \frac{t}{p} \right)^j,
  \]
  where $\pi_0$ is the trace from $K((t))$ to $K((t^p))$ of
  $\pi$. Here we recall that $\pi = \exp t - 1 \in K((t))$.  Obviously
  $(t / p)^k \cdot \Zp \subset N$, so we have to prove that for any $y
  \in \Atplus \left\{ \pi / p \right\}$ there exists an $x \in
  \Fil^k_p \Atplus \left\{ \pi / p \right\}$ with $(1 - p^{-k}
  \varphi)(x) = y$. Since $N$ and $\Atplus \left\{ \pi / p \right\}$
  are seperated and complete with respect to the $p$-adic topology, it
  suffices to show this result $\mod p$. 
  If $y = \sum_{n > k} a_n (t / p)^n$  then $x = -y$ will do the job.

  Thus it remains to show that if $y \in \Atplus$ and $j \leq $k there
  exists an $x \in N$ such that 
  \[
    (1 - p^{-k} \varphi)(x) - y \left( \frac{t}{p} \right)^j \in 
    \left( p, \left( \frac{t}{p}\right)^i; i > n  \right) \unlhd 
    \Atplus \left\{ \pi / p \right\}.
  \]
  One checks that 
  \[
    x = y' (q')^{k - j} \left( \frac{t}{p} \right)^{i},
  \]
  where $y' \in \Atplus$ is a solution of
  \[
    \varphi(y') - q'^{k - j} y' = b
  \]
  in $\Atplus$, satisfies this property (recall that $q' = \varphi^{-1}(q)$).
\end{proof}

\begin{cor}
  Dividing out $(t/p)^{-k}$ and taking the the direct limit over the
  sequence in Proposition \ref{exseq} we obtain an exact sequence
  \[
    0 \lra \Zp \lra \Fil^0 \Aexp \overset{1 - \varphi}{\lra} \Aexp \lra 0,
  \]
  where $\varphi$ is the extension of the $\varphi$ on $\Atplus$.
\end{cor}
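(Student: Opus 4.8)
The plan is to scale the $k$-th sequence of Proposition \ref{exseq} so that its leftmost term becomes $\Zp$, check that these scaled sequences form a direct system indexed by $k$, and then invoke exactness of filtered colimits. First I would record the computation that governs the scaling: since $\varphi(1+\pi) = (1+\pi)^p$ one has $\varphi(t) = pt$, hence $\varphi(t/p) = t$; combined with $t/p \in \Atplus\{\pi/p\}$ (Lemma \ref{lempit}) and $t/p \in \Fil^1 \Bdr$, this gives $t/p \in \Fil^1_p \Atplus\{\pi/p\}$, and by multiplicativity of $\Fil^\bullet_p$ also $(t/p)^k \in \Fil^k_p \Atplus\{\pi/p\}$. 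Because $t/p$ is a non-zero-divisor (everything lives in the domain $\Bplusdr$), multiplication by $(p/t)^k$ is injective and carries the sequence of Proposition \ref{exseq} isomorphically onto
\[
0 \lra \Zp \lra \left(\tfrac{p}{t}\right)^{k} \Fil^k_p \Atplus\{\pi/p\} \overset{1 - \varphi}{\lra} \left(\tfrac{p}{t}\right)^{k} \Atplus\{\pi/p\} \lra 0 .
\]
The one point to verify is that conjugating $1 - p^{-k}\varphi$ by multiplication by $(t/p)^k$ produces $1 - \varphi$; this is immediate from $\varphi((t/p)^k) = t^k$ and $(p/t)^k\, p^{-k}\, t^k = 1$. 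Here the leftmost $\Zp$ is the literal copy of $\Zp$ inside $\Atplus\{\pi/p\}$ with trivial $G_K$-action (the image of $(t/p)^k\Zp$), and the first arrow is the inclusion $1 \mapsto 1$, legitimate because $(t/p)^k \in \Fil^k_p \Atplus\{\pi/p\}$.

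Next I would assemble the sequences for $k = 0, 1, 2, \dots$ into a direct system whose transition maps are the evident inclusions on all three terms. On the right this needs only $t/p \in \Atplus\{\pi/p\}$; in the middle it needs $(t/p)\, \Fil^k_p \Atplus\{\pi/p\} \subseteq \Fil^{k+1}_p \Atplus\{\pi/p\}$, again from $t/p \in \Fil^1_p \Atplus\{\pi/p\}$ and multiplicativity of $\Fil^\bullet_p$, whence $(p/t)^k \Fil^k_p \Atplus\{\pi/p\} \subseteq (p/t)^{k+1} \Fil^{k+1}_p \Atplus\{\pi/p\}$; on the left the transition is the identity of $\Zp$. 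All these inclusions are $G_K$-equivariant, since $\chi(G_K) \subseteq \Zp^\times$ forces each module in sight to be a $G_K$-stable submodule of $\Aexp$, and they are compatible with $1 - \varphi$ and with the left-hand inclusions because those are restrictions of the single map $1 - \varphi \colon \Aexp \to \Aexp$ and of the inclusion $\Zp \hookrightarrow \Aexp$.

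Finally I would pass to the direct limit. By the definition of the filtration on $\Aexp$ (the case $i = 0$), $\varinjlim_k (p/t)^k \Fil^k_p \Atplus\{\pi/p\} = \bigcup_{k \geq 0} (p/t)^k \Fil^k_p \Atplus\{\pi/p\} = \Fil^0 \Aexp$; likewise $\varinjlim_k (p/t)^k \Atplus\{\pi/p\} = \Atplus\{\pi/p\}[p/t] = \Aexp$, and $\varinjlim_k \Zp = \Zp$ with trivial action. Since filtered colimits of abelian groups are exact, the colimit of the above short exact sequences is the asserted short exact $G_K$-sequence
\[
0 \lra \Zp \lra \Fil^0 \Aexp \overset{1 - \varphi}{\lra} \Aexp \lra 0 ,
\]
with $\varphi$ the extension to $\Aexp$ of the Frobenius on $\Atplus$ and the first map the canonical inclusion of $\Zp$; if one wishes, the continuous splittings of Proposition \ref{exseq}, being compatible with the transition maps, also pass to a continuous splitting here. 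I do not expect a genuine obstacle: once Proposition \ref{exseq} and the definition of $\Fil^0 \Aexp$ are in hand, the corollary is a formal consequence of exactness of filtered colimits. The only place that calls for care is bookkeeping — getting the scaling by $(p/t)^k$ and the transition inclusions to match up so the colimit is formed correctly and the limiting maps really are $1 - \varphi$ and the inclusion of $\Zp$ — together with the two small facts $\varphi(t/p) = t$ and $t/p \in \Fil^1_p \Atplus\{\pi/p\}$ on which everything hinges.
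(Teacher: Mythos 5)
Your proposal is correct and is precisely the argument the paper intends: the corollary's own statement ("dividing out $(t/p)^{-k}$ and taking the direct limit") is the entire proof given there, and you have simply carried it out in detail, with the key verifications ($\varphi(t/p)=t$, $t/p \in \Fil^1_p \Atplus\{\pi/p\}$, the conjugation turning $1-p^{-k}\varphi$ into $1-\varphi$, and the identification of the colimit with $\Fil^0\Aexp$ from its definition) all checking out.
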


\begin{prop}
  \label{invaexp}
  $(\Aexp)^{G_K} = \mc{O}_{K_0}$
\end{prop}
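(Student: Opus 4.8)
The plan is to prove the two inclusions separately; $\mc{O}_{K_0}\subseteq(\Aexp)^{G_K}$ is the easy one. Writing $k$ for the residue field of $K$ (which coincides with that of $K_0$ and is perfect), the canonical inclusion $k\hookrightarrow\Ocp/p$ induces $k\hookrightarrow\Etplus$ ($k$ being perfect), hence a $G_K$-equivariant inclusion $\mc{O}_{K_0}=W(k)\hookrightarrow W(\Etplus)=\Atplus\subseteq\Aexp$ on which $G_K$ acts trivially (because $K_0\subseteq K$ is fixed by $G_K$). Thus $\mc{O}_{K_0}\subseteq(\Aexp)^{G_K}$.

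For the reverse inclusion I first locate $(\Aexp)^{G_K}$ inside $K_0$. Since $\theta(\pi)=0$ we may write $\pi=b\xi$ with $b\in\Atplus$, so $\pi/p=b\cdot(\xi/p)$ lies in Fontaine's ring $\Amax$; as $\Amax$ is $p$-adically complete, the homomorphism $\Atplus[X]\to\Amax$, $X\mapsto\pi/p$, extends to the $p$-adic completion $\Atplus\{X\}$ and factors through $\Atplus\{X\}/(pX-\pi)$, giving $\Atplus\{\pi/p\}\subseteq\Amax$ inside $\Bplusdr$. Consequently $\Aexp=\Atplus\{\pi/p\}[p/t]\subseteq\Amax[1/p][1/t]=\Bmax$, so $(\Aexp)^{G_K}\subseteq(\Bmax)^{G_K}=K_0$, the last equality because $\Bmax[1/t]=\Bcris$ and $(\Bcris)^{G_K}=K_0$.

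It remains to show $K_0\cap\Aexp\subseteq\mc{O}_{K_0}$, i.e.\ to rule out $p$-denominators. By Lemma \ref{lempit} we may write $\Aexp=\Atplus\{t/p\}[p/t]$ with $\tau:=t/p\in\Atplus\{\pi/p\}$ and $p/t=\tau^{-1}$; since $\Atplus\{\pi/p\}$ is a domain (a subring of $\Bdr$), every element of $\Aexp$ has the form $y\tau^{-N}$ with $y\in\Atplus\{\pi/p\}$ and $N\ge0$. Suppose $a\in K_0\cap\Aexp$ with $v_p(a)=-j<0$, and write $a=y\tau^{-N}$. Then $a_0:=p^ja$ is a unit of $\mc{O}_{K_0}=W(k)$, so its reduction $\overline{a_0}\in k\subseteq\Etplus$ has $v(\overline{a_0})=0$ and in particular $\overline{a_0}\notin\overline{\pi}\,\Etplus$. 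From $a=y\tau^{-N}$ we get $a_0\tau^N=p^jy\in p\,\Atplus\{\pi/p\}$ since $j\ge1$. On the other hand, reducing mod $p$ and using the computations in the proof of Lemma \ref{lempit} (namely $\Atplus\{\pi/p\}/p=\Etplus[X]/(\overline{\pi})$, with $X$ the image of $\pi/p$, and $\tau\equiv-X\bmod p$), the image of $a_0\tau^N$ in $\Etplus[X]/(\overline{\pi})$ is the class of the polynomial $(-1)^N\overline{a_0}X^N$, which is nonzero because $\overline{a_0}\notin\overline{\pi}\,\Etplus$. This contradicts $a_0\tau^N\in p\,\Atplus\{\pi/p\}$, so $v_p(a)\ge0$. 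Hence $(\Aexp)^{G_K}\subseteq\mc{O}_{K_0}$, and together with the first inclusion the proposition follows.

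The main obstacle I anticipate is the middle step, namely that $(\Aexp)^{G_K}$ lands in $K_0$ and not merely in $K$ (the latter being immediate from $\Aexp\subseteq\Bdr$, since $\Bdr^{G_K}=K$): this needs the slightly subtle containment $\Atplus\{\pi/p\}\subseteq\Amax$ — note that $\pi/p$ has unbounded $p$-power denominators when expanded in $\Bpluscris$, so $\Atplus\{\pi/p\}\not\subseteq\Bpluscris$ and one really has to route through $\Amax$ — together with the standard facts $\Bmax[1/t]=\Bcris$ and $(\Bcris)^{G_K}=K_0$. The denominator-killing step is then a clean reduction mod $p$, presenting $\Aexp$ (via Lemma \ref{lempit}) as the localization of $\Atplus\{\pi/p\}=\Atplus\{t/p\}$ at $t/p$.
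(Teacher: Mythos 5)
Your proof is correct in substance and follows the same three-step skeleton as the paper (easy inclusion; invariants land in $K_0$; kill $p$-denominators), but it executes the two hard steps by genuinely different means. For step two, the paper sandwiches $\Aexp[1/p]$ between $\varphi(\Aexp[1/p])$ and $\Bcris$ via $\varphi(\Atplus\{\pi/p\}) = \Atplus\{\pi^p/p\} \subset \Acris \subset \Atplus\{\pi/p\}$ and uses injectivity of $\varphi$; you instead embed $\Atplus\{\pi/p\}$ into $\Amax$ (via $\pi = b\xi$) and quote $(\Bmax)^{G_K} = K_0$. Both work, but the paper's route is self-contained while yours imports $\Amax$; note also that your stated justification for $(\Bmax)^{G_K}=K_0$, namely ``$\Bmax[1/t]=\Bcris$'', is false as written ($t$ is already invertible in $\Bmax$, and $\Bcris \subsetneq \Bmax$) --- the correct argument is precisely the Frobenius sandwich $\varphi(\Bmax)\subset\Bcris\subset\Bmax$, i.e.\ the same trick the paper applies directly to $\Aexp[1/p]$. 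This is a misattributed justification for a true standard fact, not a gap in the architecture. For step three, the paper argues that $1/p\notin\Aexp$ by a rather terse cohomological argument (an injection into the finitely generated $\Zp$-module $H^1(G_{\Qp},\Zp)$ coming from the exact sequence of Proposition \ref{exseq}); your argument --- writing $a = y(t/p)^{-N}$ and reducing $a_0(t/p)^N$ modulo $p$ in $\Etplus[X]/(\opi)$, where it is visibly the nonzero polynomial $(-1)^N\overline{a_0}X^N$ --- is more elementary and more explicit, relying only on the presentation $\Atplus\{\pi/p\}/p = \Etplus[X]/(\opi)$ already used in Lemma \ref{lempit}. What your approach buys is a concrete, verifiable denominator-killing step; what the paper's buys is independence from $\Amax$ and $\Bmax$.

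One further caveat you share with the paper: both arguments silently identify the abstract quotient $\Atplus\{X\}/(pX-\pi)$ with its image in $\Bplusdr$ when computing modulo $p$ (your nonvanishing claim lives in the abstract quotient, while $a\in K_0\cap\Aexp$ lives in $\Bdr$). Since Lemma \ref{lempit} already makes this identification, you are on the same footing as the paper, but it is worth being aware that this is an implicit hypothesis rather than a triviality.
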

\begin{proof}
  Obviously, $\mc{O}_{K_0} \subset (\Aexp)^{G_K}$, since
  $(\Atplus)^{G_K} = \mc{O}_{K_0}$. We have the inclusions
  \[
  \varphi\left( \Atplus \left\{ \frac{\pi}{p} \right\} \right) =
  \Atplus \left\{ \frac{\pi^p}{p} \right\} \subset \Acris \subset
  \Atplus \left\{ \frac{\pi}{p} \right\},
  \]
  which leads to 
  \[
    \varphi( \Aexp[1/p] ) \subset \Bcris \subset \Aexp[1/p].
  \]
  Since $(\varphi(\Bcris))^{G_K} = (\Bcris)^{G_K} = K_0$, we have that
  $(\varphi( \Aexp[1/p] ))^{G_K} = K_0$. Since $\varphi$ is injective
  on $\Aexp$, and hence on $\Aexp[1/p]$, we obtain $(\Aexp[1/p])^{G_K} = K_0$. 
  
  Note that by the above exact sequence, $1/p \not \in \Aexp$:
  otherwise $1/p^n \in \Aexp$ for all $n \geq 0$. But taking
  $G_{\Qp}$-invariants gives an injection
  \[
  \Aexp \hookrightarrow H^1(G_{\Qp}, \Zp) = \Hom_{\trm{cts}}(G_{\Qp},
  \Zp).
  \]
  The $\Zp$-module on the right hand side is finitely generated, which
  would lead to a contradiction.

  Alternatively, one can use the exact sequence of \ref{exseq}, the filtration on $\Aexp$ and a
  limit argument to proceed as in the proof of the statement
  \[
    H^0(K, \Fil^i \Bdr / \Fil^j \Bdr) = K,
  \]
  if $i \leq 0 < j$.
\end{proof}

\begin{defi}
  Let $T$ be a full $\Zp$-lattice of $V$ that is invariant under the
  action of $G_K$ (such lattices always exist). We define the modules
  \[
  \Dexp(T) := (\Aexp \otimes_{\Zp} T)^{G_K}
  \]
  and 
  \[
    \Dexp^0(T) := (\Fil^0 \Aexp \otimes_{\Zp} T)^{G_K}.
  \]
\end{defi}

\begin{prop}
  \label{dexpfd}
  If $T$ is as before, $\Dexp(T)$ is free $\mc{O}_{K_0}$-module of
  finite rank less or equal to $\trm{rk}_{\Zp} T$.
\end{prop}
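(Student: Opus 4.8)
The plan is to exhibit $\Dexp(T)$ as a $p$-adically separated, $p$-torsion-free module over $\mc{O}_{K_0} = (\Aexp)^{G_K}$ (Proposition~\ref{invaexp}) whose generic fibre $\Dexp(T)\otimes_{\Zp}\Qp$ is a $K_0$-vector space of dimension $\leq \trm{rk}_{\Zp}T$, and then to invoke the elementary fact that a $p$-adically separated, torsion-free module of finite generic dimension over the discrete valuation ring $\mc{O}_{K_0}$ (which has uniformizer $p$, since $K_0/\Qp$ is unramified) is free of rank equal to that dimension. So there are three points to establish: the bound on the generic fibre, $p$-torsion-freeness, and $p$-adic separatedness.

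For the generic fibre: $\Aexp \subseteq \Bdr$ is $p$-torsion-free, hence so is $\Aexp \otimes_{\Zp} T \cong \Aexp^{\oplus \trm{rk}_{\Zp}T}$, and clearing denominators identifies $\Dexp(T)\otimes_{\Zp}\Qp$ with $(\Aexp[1/p]\otimes_{\Qp}V)^{G_K}$. Now $\varphi$ acts on $\Aexp[1/p]$, and by the inclusions recorded in the proof of Proposition~\ref{invaexp} (namely $\varphi(\Atplus\{\pi/p\}) = \Atplus\{\pi^p/p\}\subseteq\Acris$) one has $\varphi(\Aexp[1/p])\subseteq\Bcris$. Hence $\varphi\otimes 1$ restricts to a map $(\Aexp[1/p]\otimes_{\Qp}V)^{G_K}\ra(\Bcris\otimes_{\Qp}V)^{G_K} = \Dcris(V)$ which is injective (as $\varphi$ is injective on $\Aexp[1/p]$ and $V$ is $\Qp$-flat) and $\sigma$-semilinear over $K_0$, where $\sigma = \varphi|_{K_0}$ is the Frobenius. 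An injective semilinear map of vector spaces over a field still carries linearly independent families to linearly independent families, so $\dim_{K_0}(\Aexp[1/p]\otimes_{\Qp}V)^{G_K}\leq\dim_{K_0}\Dcris(V)\leq\dim_{\Qp}V = \trm{rk}_{\Zp}T$, the last inequality being the one recorded in Section~2.

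The main obstacle is $p$-adic separatedness. Since both torsion-freeness and separatedness descend from $\Aexp$ to $\Aexp\otimes_{\Zp}T\cong\Aexp^{\oplus \trm{rk}_{\Zp}T}$, and then to the submodule $\Dexp(T)$, it suffices to prove them for $\Aexp$ itself; torsion-freeness being clear. For separatedness, by Lemma~\ref{lempit} one has $t/p = (\pi/p)v$ with $v\in\Atplus\{\pi/p\}^\times$, so $\Aexp = \Atplus\{\pi/p\}[(t/p)^{-1}] = \Atplus\{\pi/p\}[(\pi/p)^{-1}] = \varinjlim(\Atplus\{\pi/p\}\xrightarrow{\cdot\,\pi/p}\Atplus\{\pi/p\}\xrightarrow{\cdot\,\pi/p}\cdots)$. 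From the presentation $\Atplus\{\pi/p\} = \Atplus\{X\}/(pX-\pi)$ one computes $\Atplus\{\pi/p\}/p = \Etplus[X]/(\opi) = (\Etplus/\opi)[X]$, in which the image of $\pi/p$ is the polynomial variable $X$, a non-zero-divisor; hence multiplication by $\pi/p$ preserves the $p$-adic order on $\Atplus\{\pi/p\}$ exactly. Therefore, if $x\in\Aexp$ is represented by $m\in\Atplus\{\pi/p\}$ at some stage of the colimit with $v_p(m) = r < \infty$ (equivalently $x\neq 0$), then $(\pi/p)^j m\notin p^{r+1}\Atplus\{\pi/p\}$ for all $j$, which unwinds to $x\notin p^{r+1}\Aexp$. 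Hence $\bigcap_n p^n\Aexp = 0$.

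Finally the structure lemma. Put $N := \Dexp(T)$ and $W_0 := N\otimes_{\mc{O}_{K_0}}K_0$, of dimension $r\leq\trm{rk}_{\Zp}T$ by the above; choose a lattice $\Lambda\subseteq N$ spanning $W_0$. Then $N/\Lambda$ is a submodule of $W_0/\Lambda\cong(K_0/\mc{O}_{K_0})^r$; if it were not killed by a power of $p$ it would contain a submodule isomorphic to $K_0/\mc{O}_{K_0}$, so one could lift a coherent system of generators $\overline{y}_k$ (with $\overline{y}_k$ of order $p^k$ and $p\overline{y}_{k+1} = \overline{y}_k$) to elements $y_k\in N$ and put $z_k := p^k y_k\in\Lambda$. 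Then $z_{k+1}-z_k\in p^k\Lambda$, so $(z_k)$ converges in the $p$-adically complete module $\Lambda$ to some $z$ with $z-z_k\in p^k\Lambda$; since $y_k = p^{-k}z_k$ we get $p^{-k}z = y_k + p^{-k}(z-z_k)\in N$ for every $k$, hence $z\in\bigcap_k p^k N = 0$, which forces $y_k\in\Lambda$ and contradicts $\overline{y}_1\neq 0$. Thus $N/\Lambda$ is bounded, $N\subseteq p^{-m}\Lambda$ for some $m$ is finitely generated over the Noetherian ring $\mc{O}_{K_0}$, and being torsion-free over a discrete valuation ring it is free, of rank $r\leq\trm{rk}_{\Zp}T$.
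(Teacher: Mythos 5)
Your proof is correct, but it takes a genuinely different route from the paper's. The paper runs the classical Fontaine admissibility argument: it sets up the abstract situation of a $B$-admissible ring ($B=\Aexp$, $R=\Zp\subset\mc{O}_{K_0}=S$), reduces via a diagram to the case of fraction fields, and invokes the injectivity of $B\otimes_S D_B(T)\to B\otimes_R T$ as in \cite{fontouy}, Theorem 2.13, to bound the rank; it then simply asserts that being over discrete valuation rings finishes the job. You instead bound the generic fibre by the Frobenius sandwich $\varphi(\Aexp[1/p])\subset\Bcris\subset\Aexp[1/p]$ (the same device the paper deploys in Propositions \ref{invaexp} and \ref{dexpdcris}), transporting $(\Aexp[1/p]\otimes_{\Qp}V)^{G_K}$ injectively and $\sigma$-semilinearly into $\Dcris(V)$, and then you prove freeness directly via $p$-adic separatedness of $\Aexp$ and an explicit structure lemma. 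What your approach buys is precisely the step the paper elides: a torsion-free module of bounded generic rank over a discrete valuation ring need not be free (witness $\Qp$ over $\Zp$), and your separatedness argument --- multiplication by $\pi/p$ preserves the $p$-adic order exactly because its image in $\Atplus\{\pi/p\}/p=(\Etplus/\opi)[X]$ is the non-zero-divisor $X$ --- combined with the boundedness of $N/\Lambda$ is what actually rules this out. The one thing you lean on without proof is that $\Atplus\{\pi/p\}$ itself is $p$-adically separated (your parenthetical ``equivalently $x\neq 0$''), but the paper asserts exactly this in the proof of Proposition \ref{exseq}, so you are entitled to it.
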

\begin{proof}
  This is a variaton of the proof one usually encounters in Fontaine's
  theory of $B$-admissible rings. We outline the idea:

  Let $B$ be a topological integral domain, equipped with a continuous action of a topological
  group $G$.  Set $C = \trm{Frac}(B)$ and $S = B^G$, which is again an integral domain, and fix
  a closed subring $R \subset S$. Assume $T$ is a finite free $R$-module with continuous
  $G$-action, so that $V = \trm{Frac}(R) \otimes_R T$ is a finite-dimensional
  $\trm{Frac}(R)$-vector space. Set $D_B(T) = (B \otimes_R T)^{G}$, $D_C(V) = (C
  \otimes_{\trm{Frac(R)}} V)^G$ and assume $C^G = \trm{Frac}(S)$. 
  We want to prove the injectivity of the map 
  \[
  B \otimes_{S} D_B(T) \lra B \otimes_R T .
  \]
  The inclusion $B \hookrightarrow C$ and the freeness of $T$ gives a diagram
  \[
    \xymatrix{
    B \otimes_{S} D_B(T) \ar[r] \ar@{^{(}->}[d] & B \otimes_R T \ar@{^{(}->}[dd] \\
    B \otimes_{S} D_C(V) \ar@{^{(}->}[d] \\
    C \otimes_{\trm{Frac}(S)} D_C(V) \ar[r] & C \otimes_{\textrm{Frac}(R)} V
    },
  \]
  so that we are reduced to the case where all the rings are fields. Now one proceeds exactly
  as in \cite{fontouy}, Theorem 2.13.

  The above situation applies with $B = \Aexp$, $R = \Zp \subset \mc{O}_{K_0} = S$. The
  injectivity of the above map implies, by using the above notation and going to the quotient
  field $C$, that $D_B(T)$ is of $S$-rank smaller or equal than the $R$-rank of $T$. Since we
  these latter rings are discrete valuation rings, we are done.
\end{proof}

\begin{prop}
  \label{dexpdcris}
  If $V$ and $T$ are as above, we have
  \[
    \Dexp(T)[1/p] = \Dexp(V) = (\Aexp[1/p] \otimes_{\Qp} V)^{G_K} = \Dcris(V), 
  \]
  and this identification is compatible with the $K$-filtrations and
  the action of the Frobenius $\varphi$.
\end{prop}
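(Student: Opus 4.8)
The plan is to establish the asserted chain of identifications one link at a time, the comparison $(\Aexp[1/p]\otimes_{\Qp}V)^{G_K}=\Dcris(V)$ being the heart of the matter. The equality $\Dexp(T)[1/p]=\Dexp(V)$ is formal: since $\Aexp\subseteq\Bdr$ is a characteristic-$0$ domain it is $\Zp$-flat, so $M:=\Aexp\otimes_{\Zp}T$ is $p$-torsion free with $M[1/p]=\Aexp[1/p]\otimes_{\Qp}V$, and for such a module $M^{G_K}[1/p]=(M[1/p])^{G_K}$, because if $y=m/p^N\in M[1/p]$ is $G_K$-invariant then $p^N\sigma(m)=p^N m$ forces $\sigma(m)=m$ for every $\sigma\in G_K$. (Proposition \ref{dexpfd} is consistent with this but not needed here.) The middle equality is simply the definition of $\Dexp(V)$, so everything reduces to identifying $(\Aexp[1/p]\otimes_{\Qp}V)^{G_K}$ with $\Dcris(V)=(\Bcris\otimes_{\Qp}V)^{G_K}$.

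For this I would exploit the two containments $\Bcris\subseteq\Aexp[1/p]$ and $\varphi(\Aexp[1/p])\subseteq\Bcris$ already recorded in the proof of Proposition \ref{invaexp}. The first gives $\Dcris(V)\subseteq(\Aexp[1/p]\otimes_{\Qp}V)^{G_K}=\Dexp(V)$ at once. For the reverse inclusion, let $x\in\Dexp(V)\subseteq\Aexp[1/p]\otimes_{\Qp}V$; then $\varphi(x):=(\varphi\otimes\mathrm{id}_V)(x)$ lies in $\varphi(\Aexp[1/p])\otimes_{\Qp}V\subseteq\Bcris\otimes_{\Qp}V$ and is $G_K$-invariant (as $\varphi$ commutes with $G_K$), so $\varphi(x)\in\Dcris(V)$. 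Now $\varphi$ acts $\sigma$-semilinearly and injectively on the finite-dimensional $K_0$-vector space $\Dcris(V)$ — injectively because it is injective on $\Bcris$ — hence bijectively; and $\varphi\otimes\mathrm{id}_V$ is injective on $\Aexp[1/p]\otimes_{\Qp}V$ since $\varphi$ is injective on $\Aexp[1/p]$. Choosing $y\in\Dcris(V)$ with $\varphi(y)=\varphi(x)$ and cancelling $\varphi$ gives $x=y\in\Dcris(V)$, completing the equality. I expect this ``apply $\varphi$, land in $\Dcris(V)$, then cancel $\varphi$'' step to be the crux of the proof; note that it uses nothing about $V$ beyond its being a $p$-adic representation.

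It remains to check compatibility. For the Frobenius it is immediate: under $\Dexp(V)=\Dcris(V)$ both Frobenii are the restriction of $\varphi\otimes\mathrm{id}_V$ on the common ambient module $\Aexp[1/p]\otimes_{\Qp}V$, with $\varphi$ on $\Bcris$ a restriction of $\varphi$ on $\Aexp[1/p]$. For the $K$-filtration the main observation is that the $p$-refinement in $\Fil^{\bullet}_p$ evaporates after inverting $p$: one checks $\Fil^j_p\Atplus\{\pi/p\}[1/p]=\Fil^j\Bdr\cap\Atplus\{\pi/p\}[1/p]$ for all $j$, since $\varphi$ preserves $\Atplus\{\pi/p\}[1/p]$ and $p^j$ is now a unit. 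Unwinding $\Fil^i\Aexp=\bigcup_{i+k\ge0}(p/t)^k\Fil^{i+k}_p\Atplus\{\pi/p\}$ and using that $p/t$ is a unit in the field $\Bdr$, this gives $\Fil^i\Aexp[1/p]=\Fil^i\Bdr\cap\Aexp[1/p]$. Since $V$ is finite-dimensional over $\Qp$, tensoring with $V$ and taking $G_K$-invariants preserve this intersection, so $\Fil^i\Dexp(V)=\Fil^i\Ddr(V)\cap\Dexp(V)=\Fil^i\Ddr(V)\cap\Dcris(V)$ inside $\Ddr(V)$; as $V$ is crystalline, hence de Rham, the natural map $\Dcris(V)\otimes_{K_0}K\to\Ddr(V)$ is an isomorphism, and base-changing the last identity along $K_0\hookrightarrow K$ recovers the standard $K$-filtration on $\Dcris(V)\otimes_{K_0}K$ (in the unramified case $K=K_0$ of the main theorem it is literally $\Fil^i\Ddr(V)$). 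The only point needing care is that the two embeddings $\Dcris(V)\hookrightarrow\Ddr(V)$ — the direct one from $\Bcris\subseteq\Bdr$ and the one through $\Dcris(V)\otimes_{K_0}K$ — coincide, which is straightforward.
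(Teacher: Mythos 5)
Your proposal is correct and follows essentially the same route as the paper: the heart of both arguments is the sandwich $\varphi(\Aexp[1/p])\subseteq\Bcris\subseteq\Aexp[1/p]$ from Proposition \ref{invaexp} combined with the bijectivity of $\varphi$ on the finite-dimensional $K_0$-vector space $\Dcris(V)$ and its injectivity on $\Aexp[1/p]\otimes_{\Qp}V$ to cancel $\varphi$ and conclude equality of the invariant modules. You merely spell out the formal reduction $\Dexp(T)[1/p]=\Dexp(V)$ and the filtration compatibility in more detail than the paper, which dismisses the latter with ``can be checked by the construction.''
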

\begin{proof}
  The proof is similarly as in Proposition \ref{invaexp}. We have inclusions
  \[
  (\varphi(\Aexp) \otimes T)^{G_K} \subset (\Acris
  \otimes T)^{G_K} \subset (\Aexp \otimes T)^{G_K},
  \]
  and since $\varphi$ is bijective on $\Dcris(V)$ and injective on $\Aexp$, we conclude as
  before $(\Aexp[1/t] \otimes T)^{G_K} = \Dcris(V)$.

  The compatibility with filtration and Frobenius can be checked by the construction.
\end{proof}

\section{Categories in integral $p$-adic Hodge theory}

Let $K / \Qp$ be unramified for this section.

\begin{defi}
  A \textbf{Fontaine-Laffaille module} over $\OK$ is a triple $(M, ( M^i )_{i \in \mb{Z}},\\
  (\varphi^i)_{i \in \mb{Z}} )$, which we also denote simply by $M$, consisting of
  \begin{itemize}
    \item an $\OK$-module $M$,
    \item an exhausting and seperated decreasing filtration (of $\OK$-modules) $M^i$ of $M$,
    \item a family of $\sigma$-semilinear maps $\varphi^i : M^i \ra M$
      with the property $\varphi^i|_{M^{i+1}} = p \cdot \varphi^{i+1}$.
  \end{itemize}
  A morphism of Fontaine-Laffaille modules $f: M \ra N$ is a
  $\OK$-linear map $f$ such that $f(M^i) \subset N^i$ and $f \circ
  \varphi^i_M = \varphi^i_N \circ f$.  We denote by
  $\underline{\trm{MF}}_{\OK}$ the exact category of all
  Fontaine-Laffaille modules over $\OK$.
\end{defi}

\begin{defi}
  A \textbf{filtered Dieudonn\'e module} over $\OK$ is a Fontaine-Laffaille module $M$ such that 
  \begin{itemize}
    \item $M$ is of finite type over $\OK$, 
    \item $M^i = M$ for $i \ll 0$ and $M^j = 0$ for $j \gg 0$,
    \item $M_\sigma = \sum_{i \in \mb{Z}} \varphi^i(M^i)$.
  \end{itemize}
  We denote by $\underline{\trm{MF}}^{\trm{fd}}_{\OK}$ (fortement divisible) the category of
  all filtered Dieudonn\'e modules.
\end{defi}

Here, $M_\sigma$ denotes the underlying module $M$, where $W$ acts via
$\sigma$.  Note that contrary to the usual convention we allow our
Dieudonn\'e modules to contain torsion.

\begin{ex}
  We have $\Aexp = (\Aexp, (\Fil^i \Aexp)_{i \in \mb{Z}},
  (\varphi^i)_{i \in \mb{Z}}) \in \MF$, where $\Aexp$ and the
  filtration are given as before, and $\varphi^i = 1/p^i \cdot
  \varphi$, with $\varphi$ induced from the Frobenius on $\Atplus$.
\end{ex}

\begin{thm}
  The category $\underline{\trm{MF}}^{\trm{fd}}_{\OK}$ is abelian.
\end{thm}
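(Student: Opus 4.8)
The plan is to show that $\MFtor^{\trm{fd}}_{\OK}$ (the category of filtered Dieudonn\'e modules over $\OK$, in the torsion-allowing variant) satisfies the axioms of an abelian category. Since we already know it is an exact subcategory of the additive category $\MF$, the main work is to produce kernels and cokernels inside the subcategory and to check that the canonical map from coimage to image is an isomorphism. First I would recall that for a morphism $f : M \ra N$ of Fontaine-Laffaille modules the underlying $\OK$-module kernel $K = \ker f$ and cokernel $C = \coker f$ carry natural filtrations: $K^i = K \cap M^i$ and $C^i = \trm{im}(N^i \ra C)$, together with the induced $\sigma$-semilinear maps $\varphi^i_K$ and $\varphi^i_C$, and that these are visibly Fontaine-Laffaille modules with $f$ factoring as $M \twoheadrightarrow \trm{coim}\, f \ra \trm{im}\, f \hookrightarrow N$. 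So the categorical structure is already present in $\MF$; the point is to verify it descends to the Dieudonn\'e subcategory.

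Next I would check the three defining conditions of a filtered Dieudonn\'e module for $K$, $C$, the image and the coimage. Finite type over $\OK$ is clear, as $\OK$ is noetherian and all modules in sight are subquotients of finite type modules. The condition that the filtration is trivial in degree $\ll 0$ and zero in degree $\gg 0$ is inherited immediately by sub- and quotient-filtrations. The real content is the ``fortement divisible'' / Dieudonn\'e condition $M_\sigma = \sum_i \varphi^i(M^i)$: it is not a priori obvious that this is stable under passing to subobjects and quotients. Here I would use the standard strictness argument: one first treats the case where $M$ (equivalently $N$) is $p$-torsion-free, where the condition is equivalent to strong divisibility in the classical sense and behaves well under short exact sequences by a length/determinant count; then one reduces the torsion case to the torsion-free case by choosing a presentation $0 \ra M' \ra M'' \ra M \ra 0$ with $M', M''$ torsion-free filtered Dieudonn\'e modules (such resolutions exist by the theory), applying the known behaviour there, and pushing forward. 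An alternative, cleaner route is to verify directly that for the sub-object (image of $f$) and the quotient-object (coimage of $f$) the surjectivity $\sum_i \varphi^i(-^i) = (-)_\sigma$ follows from the corresponding surjectivity for $M$ and $N$ by a diagram chase, using that $f$ is automatically strict for the filtrations once we show it on associated graded pieces.

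Having established that kernels and cokernels exist in the subcategory, the final step is to show the canonical morphism $\trm{coim}\, f \ra \trm{im}\, f$ is an isomorphism. As a morphism of underlying $\OK$-modules it is already an isomorphism (that category is abelian); what must be checked is that it is an isomorphism of Fontaine-Laffaille modules, i.e.\ that it induces an isomorphism on filtrations and commutes with the $\varphi^i$ — equivalently, that $f$ is strict. Strictness for filtrations over a discrete valuation ring is the assertion $f(M) \cap N^i = f(M^i)$ for all $i$, which again follows from a short-exact-sequence argument reducing to the torsion-free case, where it is classical (this is where one genuinely uses the strong divisibility hypothesis: without it, filtered $\varphi$-modules over $\OK$ need \emph{not} form an abelian category). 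The compatibility with $\varphi^i$ is then formal.

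The step I expect to be the main obstacle is the stability of the Dieudonn\'e condition $M_\sigma = \sum_i \varphi^i(M^i)$ under subobjects and quotients in the presence of torsion, together with the strictness of morphisms: these two are really the same difficulty, and both are handled by dévissage to the torsion-free situation via resolutions by lattices, so the technical heart of the proof is setting up those resolutions and checking that the associated-graded argument goes through $p$-adically. Once strictness is in hand, the abelian-category axioms are a formality.
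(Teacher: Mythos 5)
Your diagnosis of where the difficulty lies (strictness of morphisms and stability of the condition $M_\sigma = \sum_i \varphi^i(M^i)$ under kernels and cokernels) is reasonable, but the d\'evissage you propose runs in the opposite direction from the paper's, and it has two genuine gaps. First, you reduce the torsion case to the torsion-free case by resolving a torsion object as a quotient of strongly divisible lattices, $0 \ra M' \ra M'' \ra M \ra 0$, and appeal to ``the theory'' for the existence of such resolutions. But in Fontaine--Laffaille's development the liftability of torsion objects to lattices is itself a nontrivial theorem whose proof relies on the abelianness of the torsion category $\underline{\trm{MF}}^{\trm{fd},\trm{tor}}_{\OK}$ (their Proposition 1.8, proved by a direct length count on torsion objects, not by reduction to lattices); so as written your argument is circular, and even granting the resolutions, the step ``applying the known behaviour there and pushing forward'' --- transporting strictness from $M''$ to $M$ --- is asserted rather than carried out. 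Note also that the category of torsion-free strongly divisible modules is not itself abelian (the cokernel of multiplication by $p$ is torsion), so the torsion-free case can only supply strictness statements, not the categorical axioms, which makes your direction of reduction the harder one.

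Second, a general object of $\underline{\trm{MF}}^{\trm{fd}}_{\OK}$ is neither torsion nor torsion-free, and your proposal does not say how kernels and cokernels of a map between two such mixed objects are produced. The paper's proof is organized the other way around precisely to handle this: it takes the abelianness of the torsion subcategory as known input (citing Fontaine--Laffaille), reduces an arbitrary morphism $f : M \ra N$ modulo $p^n$ to obtain $f_n : M/p^n \ra N/p^n$ in that subcategory, where kernels and cokernels exist, and then recovers $\ker f$ and $\trm{coker}\, f$ by passing to the inverse limit using $M = \varprojlim M/p^n$ compatibly with the filtrations and the $\varphi^i$; normality of mono- and epimorphisms is inherited in the same way. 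To repair your argument you would need either to prove the torsion case directly by the length argument or to cite it, and in either case you would still need the completeness/limit step to pass from the torsion quotients $M/p^n$ to the finite-type objects of the theorem.
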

\begin{proof}
  This follows from the fact that $\underline{\trm{MF}}^{\trm{fd},
    \trm{tor}}_{\OK}$, the subcategory of all torsion $\OK$-modules
  (\cite{fontlaf}, Proposition 1.8), is abelian, and completeness: let $f: M
  \ra N$ be a map in $\MF_{\mc{O_K}}$ with $M, N \in
  \underline{\trm{MF}}^{\trm{fd}}_{\OK}$. This gives us, for any $n
  \in \mb{N}$, a map $f_n : M / p^n \ra N / p^n$ in
  $\underline{\trm{MF}}^{\trm{fd}, \trm{tor}}_{\OK}$, since $M$ and
  $N$ are of finite type, hence kernel and cokernel of $f_n$ exist.

  Since $M = \varprojlim M / p^n$, in a compatible way with the
  filtration and the Frobenii, we obtain, by going to the limit, the
  kernel and the cokernel of the map $f$. The normality of mono- and
  epimorphisms is an easy consequence again of the property that
  $\underline{\trm{MF}}^{\trm{fd}, \trm{tor}}_{\OK}$ is abelian.
\end{proof}


\begin{defi}
  A \textbf{filtered $\varphi$-module} over $K$ is a triple $(D, (D^i)_{i \in \mb{Z}}, \varphi)$, 
  consisting of 
  \begin{itemize}
  \item a $K$-vector space $D$,
  \item an exhausting and seperated decreasing filtration (of $K$-vector spaces) $D^i$ of $D$,
  \item a $\sigma$-semilinear map $\varphi: D \ra D$.
  \end{itemize}
  A morphism of filtered $\varphi$-modules is, similarly as before, a morphism of $K$-vector spaces 
  compatible with the filtration and Frobenius $\varphi$. We denote by $\MF_{K}$ the category of 
  all $\varphi$-modules.

  A finite-dimensional filtered $\varphi$-module is called
  \textbf{admissible} if $t_N(D) = t_H(D)$ and $t_N(D') \leq t_H(D')$
  for all subobjects $D' \subset D$ in $\MF_{K}$, where $t_N$
  resp. $t_H$ are the Newton- resp. Hodge number of $D$ (see \cite{fontouy}, 6.4.2.).
\end{defi}

\begin{ex}
  If $M \in \MF_{\OK}^{\trm{fd}}$, one can naturally associate a
  finite-dimensional $\varphi$-module $D$ to $M$, namely $D := K
  \otimes_{\OK} M$, with the filtration induced by $M^i$, and
  Frobenius $\varphi := 1/p^n \cdot \varphi^n$ for $n \ll 0$. We call
  $M$ \textbf{admissible} if $D$ is admissible.
\end{ex}

\begin{prop}
  \label{critfd}
  Let $D \in \MF_{K}$ be admissible. Then an $\OK$-lattice $M$,
  equipped with a filtration $M^i$ such that $M^i[1/p] = D^i$ can
  be considered as an object of $\MF^{\trm{fd}}_{\OK}$ if and only if
  $\varphi(M^i) \subset p^i M$ (that is, one puts $\varphi^i := p^{-i}
  \varphi$).
\end{prop}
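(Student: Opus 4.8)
The plan is to handle the two implications separately, the ``only if'' direction being immediate and the ``if'' direction carrying all the content. Throughout I use that $K/\Qp$ unramified makes $\OK$ a complete discrete valuation ring with uniformizer $p$, residue field $k$, and $\sigma$ its Frobenius, so that $M$ is a free $\OK$-module of rank $d=\dim_K D$; I also use that the graded pieces $M^i/M^{i+1}$ are torsion-free (this is the case for the induced filtration $M^i=M\cap D^i$, which is what the hypothesis $M^i[1/p]=D^i$ is meant to encode), hence free of rank $\dim_K(D^i/D^{i+1})$, and that $M^i=M$ for $i\ll 0$ and $M^i=0$ for $i\gg 0$ (the latter because $M^i$ is then torsion in the torsion-free module $M$; the former because $M$ is finitely generated and the filtration is exhaustive).

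For the ``only if'' direction: if $(M,(M^i),(\varphi^i))$ is an object of $\MF^{\trm{fd}}_{\OK}$ with $\varphi^i=p^{-i}\varphi$, then by definition $\varphi^i$ maps $M^i$ into $M$, so $\varphi(M^i)=p^i\varphi^i(M^i)\subseteq p^iM$; here I only use that $M$ is $p$-torsion free. For the ``if'' direction, assume $\varphi(M^i)\subseteq p^iM$ for all $i$ and set $\varphi^i:=p^{-i}\varphi\colon M^i\ra M$. First I would check the routine Fontaine-Laffaille axioms: $\varphi^i$ is well defined by the hypothesis and torsion-freeness, it is $\sigma$-semilinear since $p$ is fixed by $\sigma$, and $\varphi^i|_{M^{i+1}}=p^{-i}\varphi=p\cdot p^{-(i+1)}\varphi=p\,\varphi^{i+1}$; the filtration is exhaustive and separated by hypothesis and $M$ is of finite type with the required stabilization. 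The only substantial point is strong divisibility, i.e.\ $M=\sum_i\varphi^i(M^i)=\sum_i p^{-i}\varphi(M^i)=:N$, the inclusion $N\subseteq M$ being exactly the hypothesis.

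To prove $N=M$ I would choose an $\OK$-basis $e_1,\dots,e_d$ of $M$ adapted to the filtration: since $\OK$ is a DVR and the $M^i/M^{i+1}$ are free, the inclusions $M^{i+1}\hookrightarrow M^i$ split, so $M=\bigoplus_i C_i$ with $C_i\cong M^i/M^{i+1}$ and $M^i=\bigoplus_{i'\ge i}C_{i'}$; a basis of each $C_i$ yields $e_1,\dots,e_d$ together with integers $h_1\le\cdots\le h_d$ (the Hodge--Tate weights) with $M^i=\bigoplus_{j:\,h_j\ge i}\OK e_j$. Then $\sum_j h_j=\sum_i i\cdot\mathrm{rk}_{\OK}(M^i/M^{i+1})=\sum_i i\cdot\dim_K(D^i/D^{i+1})=t_H(D)$, and applying the hypothesis with $i=h_j$ gives $\varphi(e_j)\in p^{h_j}M$, so $f_j:=p^{-h_j}\varphi(e_j)\in M$. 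Grouping the sum defining $N$ by basis vector, and using $\sigma(\OK)=\OK$ together with $\sum_{i\le h_j}p^{-i}\OK=p^{-h_j}\OK$, I get $N=\sum_j p^{-h_j}\OK\,\varphi(e_j)=\sum_j\OK f_j$. Hence $N=M$ iff $(f_1,\dots,f_d)$ is an $\OK$-basis of $M$, i.e.\ iff the transition matrix from $(e_j)$ to $(f_j)$ lies in $\mathrm{GL}_d(\OK)$. Writing $\varphi(e_j)=\sum_k A_{kj}e_k$, that matrix is $A\cdot\mathrm{diag}(p^{-h_1},\dots,p^{-h_d})$, with determinant of valuation $v_p(\det A)-\sum_j h_j=t_N(D)-t_H(D)$. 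Admissibility gives $t_N(D)=t_H(D)$, so the determinant is a unit and $N=M$, completing the proof.

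The hard part is this last step: one must organize things so that strong divisibility reduces to the single numerical identity $t_N(D)=t_H(D)$, which requires (i) producing a filtration-adapted $\OK$-basis, where I use that $\OK$ is a DVR and that the induced filtration has torsion-free graded pieces, and (ii) identifying $v_p(\det\varphi)$ with $t_N(D)$ and $\sum_j h_j$ with $t_H(D)$ through that basis. It is worth noting that only the equality $t_N(D)=t_H(D)$ is used, not the sub-object inequalities in the definition of admissibility, and that if the filtration on $M$ were allowed to be non-saturated (strictly smaller than $M\cap D^i$) the ``if'' direction would fail, so the induced filtration is what the statement intends.
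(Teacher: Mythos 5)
Your proof is correct, but it takes a genuinely different route from the paper. The paper disposes of the statement by citation: it observes that only strong divisibility $M_\sigma=\sum_i\varphi^i(M^i)$ needs to be checked, reduces to the case where all Hodge--Tate weights are $\geq 0$ (by twisting), and then invokes the proof of Fontaine--Laffaille, Theorem 3.2, together with Laffaille, Proposition 7.8. You instead give a self-contained argument: after choosing a filtration-adapted basis, strong divisibility becomes the assertion that the integral matrix $A\cdot\mathrm{diag}(p^{-h_1},\dots,p^{-h_d})$ lies in $\mathrm{GL}_d(\OK)$, which reduces to the single valuation identity $v_p(\det A)-\sum_j h_j=t_N(D)-t_H(D)=0$. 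This buys transparency (only the equality $t_N=t_H$ from admissibility is used, and no reduction to non-negative weights is needed since the whole computation lives inside $D$), at the cost of being specific to the unramified DVR setting, whereas the cited results sit inside a more general theory. One caveat you handle well but that deserves emphasis: the hypothesis $M^i[1/p]=D^i$ alone does not force the graded pieces $M^i/M^{i+1}$ to be torsion-free, and without that (i.e.\ without $M^i=M\cap D^i$) the ``if'' direction is literally false --- your counterexample-style remark at the end is apt --- so your reading of the hypothesis as the saturated filtration is an interpretive assumption, but it is the one the statement must intend and the paper's citation implicitly makes the same assumption.
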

\begin{proof}
  The only thing we have to check is that the condition on $\varphi$
  holds, then $D$ is already in $\MF^{\trm{fd}}_{\OK}$.  This can be
  infered by the proof of \cite{fontlaf}, Theorem 3.2, after reducing
  to the case where all weights are $\geq 0$ (see also \cite{laf},
  Proposition 7.8).
\end{proof}

\begin{prop}
  If $V$ is crystalline and $T$ as above, then $\Dexp(T) \in \MF_{\OK}^{\trm{fd}}$.
\end{prop}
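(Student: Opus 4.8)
The plan is to realise $\Dexp(T)$ as a strongly divisible (``fortement divisible'') $\OK$-lattice inside the admissible filtered $\varphi$-module $\Dcris(V)$ and then to invoke Proposition~\ref{critfd}; recall that in this section $\OK=\mc{O}_{K_0}$ and $K=K_0$. First I would collect the inputs that Proposition~\ref{critfd} needs. Since $V$ is crystalline, $\Dcris(V)$ is an admissible filtered $\varphi$-module by the Colmez--Fontaine theorem recalled in Section~2. By Proposition~\ref{dexpdcris} there is an identification $\Dexp(T)[1/p]=\Dcris(V)$ of filtered $\varphi$-modules, so that $\Dexp^i(T)[1/p]=\Fil^i\Dcris(V)$ for every $i$ and the Frobenius on $\Dcris(V)$ is the one induced by $\varphi$ on $\Aexp$. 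By Proposition~\ref{dexpfd}, $\Dexp(T)$ is a finite free $\OK$-module; as $\Dexp(T)[1/p]=\Dcris(V)$ and $\dim_{K_0}\Dcris(V)=\dim_{\Qp}V$ by crystallinity, its rank equals $\dim_{K_0}\Dcris(V)$, so $\Dexp(T)$ is a full $\OK$-lattice in $\Dcris(V)$ with filtration $\Dexp^i(T)$ satisfying $\Dexp^i(T)[1/p]=\Fil^i\Dcris(V)$.

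The heart of the matter is to check that the twisted Frobenius $\varphi^i:=p^{-i}\varphi$ is $\OK$-integral, i.e.\ $\varphi(\Dexp^i(T))\subseteq p^i\Dexp(T)$; this is exactly the condition appearing in Proposition~\ref{critfd}. It suffices to prove the ring-level statement $\varphi(\Fil^i\Aexp)\subseteq p^i\Aexp$: since $\varphi$ commutes with the $G_K$-action and $T$ is $\Zp$-free, one then obtains
\[
\varphi(\Dexp^i(T))=\varphi\big((\Fil^i\Aexp\otimes_{\Zp}T)^{G_K}\big)\subseteq(p^i\Aexp\otimes_{\Zp}T)^{G_K}=p^i\,\Dexp(T).
\]
To prove $\varphi(\Fil^i\Aexp)\subseteq p^i\Aexp$, recall that $\Fil^i\Aexp=\bigcup_{i+k\geq 0}(p/t)^k\,\Fil^{i+k}_p\Atplus\{\pi/p\}$ and that $\varphi(t)=pt$, whence $\varphi(p/t)=1/t$. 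Given $x=(p/t)^k y$ with $y\in\Fil^{i+k}_p\Atplus\{\pi/p\}$ and $i+k\geq 0$, the defining property of $\Fil^{i+k}_p$ gives $\varphi(y)=p^{i+k}w$ with $w\in\Atplus\{\pi/p\}$, so
\[
\varphi(x)=\varphi(p/t)^k\,\varphi(y)=t^{-k}\cdot p^{i+k}w=p^i\,(p/t)^k w\ \in\ p^i\Aexp,
\]
which proves the claim.

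Finally, I would apply Proposition~\ref{critfd} with $D=\Dcris(V)$, $M=\Dexp(T)$ and $M^i=\Dexp^i(T)$: $D$ is admissible, $M$ is an $\OK$-lattice with $M^i[1/p]=D^i$, and $\varphi(M^i)\subseteq p^iM$ by the previous step; hence $\Dexp(T)\in\MF^{\trm{fd}}_{\OK}$ with partial Frobenii $\varphi^i=p^{-i}\varphi$. The remaining axioms of a filtered Dieudonn\'e module (boundedness of the filtration, i.e.\ $\Dexp^i(T)=\Dexp(T)$ for $i\ll 0$ and $=0$ for $i\gg 0$, and the identity $\Dexp(T)_\sigma=\sum_i\varphi^i(\Dexp^i(T))$) are subsumed in that proposition, using that the Hodge filtration of $\Dcris(V)$ is bounded and $\Dexp^i(T)[1/p]=\Fil^i\Dcris(V)$. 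I expect the only genuine obstacle to be the integrality computation $\varphi(\Fil^i\Aexp)\subseteq p^i\Aexp$ above --- precisely the fortement divisible condition, resting on $\varphi(t)=pt$ and the definition of $\Fil^\bullet_p$ --- with everything else being assembled from Propositions~\ref{dexpfd}, \ref{dexpdcris}, \ref{critfd} and Colmez--Fontaine.
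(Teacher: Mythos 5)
Your proposal is correct and follows essentially the same route as the paper: establish via Propositions~\ref{dexpfd} and \ref{dexpdcris} that $\Dexp(T)$ is a finite free $\OK$-lattice in the admissible module $\Dcris(V)$ with $\Dexp^i(T)[1/p]=\Fil^i\Dcris(V)$, check the strong divisibility $\varphi(\Dexp^i(T))\subseteq p^i\Dexp(T)$, and conclude by Proposition~\ref{critfd}. The only difference is that you spell out the integrality step $\varphi(\Fil^i\Aexp)\subseteq p^i\Aexp$ (via $\varphi(t)=pt$ and the definition of $\Fil^\bullet_p$), which the paper merely asserts; your computation of it is correct.
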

\begin{proof}
  We know from proposition \ref{dexpfd} that $\Dexp(T)$ is a free $\OK$-module of finite
  rank and that $\Dexp(T)[1/p] = \Dcris(V)$ (\ref{dexpdcris}) is admissible,
  since $V$ is crystalline. Since $\Dexp^i(T)[1/p] = \Fil^i \Dcris(V)$
  and $\varphi^i (\Dexp^i(T)) \subset \Dexp(T)$, the requirements of
  proposition \ref{critfd} are fulfilled, hence the claim.
\end{proof}

\section{Computation of $H^1_e(K, T)$}

Assume again that $K / \Qp$ is unramified. We collect some facts
from sections 3 and 4 of \cite{bk93}.

\begin{prop}
  Let $M \in \MF_{\OK}^{\trm{fd}}$ and put
  \[
    H^0(M) = \trm{ker} ((1 - \varphi^0) : M^0 \ra M),~~~
    H^1(M) = \trm{coker} ((1 - \varphi^0) : M^0 \ra M).
  \]
  and $H^i(M) = 0$ for $i \geq 2$. Then $(H^i)_{i \in \mb{N}}$ is a 
  cohomological $\delta$-functor.
\end{prop}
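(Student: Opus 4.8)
The plan is to realize $H^{\bullet}(M)$ as the cohomology of the two-term complex $C^{\bullet}(M) := \big[\, M^0 \overset{1-\varphi^0}{\lra} M \,\big]$ placed in degrees $0$ and $1$, and then to obtain the $\delta$-functor structure from the long exact cohomology sequence attached to a short exact sequence of such complexes. Functoriality of $H^0$ and $H^1$ in $M$ is immediate, since a morphism $f : M \to N$ in $\MF_{\OK}^{\trm{fd}}$ satisfies $f(M^0) \subset N^0$ and $f \circ \varphi^0_M = \varphi^0_N \circ f$, hence induces a morphism of complexes $C^{\bullet}(M) \to C^{\bullet}(N)$ and thus maps on kernels and cokernels; and $H^i(M) = 0$ for $i \geq 2$ by construction.

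First I would fix a short exact sequence $0 \to M' \to M \to M'' \to 0$ in $\MF_{\OK}^{\trm{fd}}$ (abelian by the previous theorem) and check that applying $C^{\bullet}(-)$ yields a short exact sequence of complexes of abelian groups $0 \to C^{\bullet}(M') \to C^{\bullet}(M) \to C^{\bullet}(M'') \to 0$. In degree $1$ this is just the underlying exact sequence of $\OK$-modules. In degree $0$ one needs that $0 \to (M')^0 \to M^0 \to (M'')^0 \to 0$ is exact; here the description of kernels and cokernels in $\MF_{\OK}^{\trm{fd}}$ enters: the monomorphism $M' \hookrightarrow M$, being a kernel, carries the induced filtration, so $(M')^0 = M' \cap M^0$, and the epimorphism $M \twoheadrightarrow M''$, being a cokernel, carries the quotient filtration, so $(M'')^0$ is the image of $M^0$. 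Surjectivity of $M^0 \to (M'')^0$ and the identification $\ker(M^0 \to M'') = M^0 \cap M' = (M')^0$ are then immediate. The horizontal maps $1 - \varphi^0$ commute with the inclusion and the projection because morphisms of filtered Dieudonn\'e modules commute with $\varphi^0$ (and with the identity), so these are genuine morphisms of complexes.

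Then I would invoke the long exact cohomology sequence of a short exact sequence of complexes — concretely the snake lemma for the ladder with rows $(M')^0 \to M^0 \to (M'')^0$ and $M' \to M \to M''$ and vertical arrows $1 - \varphi^0$. Since all three complexes are concentrated in degrees $0$ and $1$, this produces exactly
\[
0 \to H^0(M') \to H^0(M) \to H^0(M'') \overset{\delta}{\lra} H^1(M') \to H^1(M) \to H^1(M'') \to 0,
\]
and the naturality of the connecting map $\delta$ with respect to morphisms of short exact sequences is the standard naturality of the snake-lemma boundary map. Together with the vanishing $H^i = 0$ for $i \geq 2$, this furnishes exactly the data and axioms of a cohomological $\delta$-functor. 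The only point that requires genuine input from the theory, rather than formal homological algebra, is the exactness of $M \mapsto M^0$ on short exact sequences, i.e. the strict compatibility of the mono and the epi with the filtrations; I would extract this from the construction underlying the theorem that $\MF_{\OK}^{\trm{fd}}$ is abelian (reducing, as there, to the torsion case treated by Fontaine--Laffaille).
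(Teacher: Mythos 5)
Your proposal is correct and follows exactly the route the paper intends: the paper's entire proof is the single sentence ``this is abundantly clear by the snake lemma,'' and you have simply spelled out that snake-lemma argument, correctly isolating the one non-formal input (strictness of morphisms, so that $M \mapsto M^0$ is exact on short exact sequences, which comes from the Fontaine--Laffaille torsion case underlying the abelianness of $\MF^{\trm{fd}}_{\OK}$). Nothing further is needed.
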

\begin{proof}
  This is abundantly clear by the snake lemma.
\end{proof}

Recall (\cite{bk93}, Proposition 1.17) the Bloch-Kato fundamental exact
sequences
\[
0 \lra \Qp \lra \Bcris^{\varphi = 1} \oplus \Bplusdr \overset{f}{\lra}
\Bdr \lra 0,
\]
where $f(x, y) = x - y$, and 
\[
0 \lra \Qp \lra \Bcris \oplus \Bplusdr \overset{f}{\lra} \Bcris \oplus
\Bdr \lra 0,
\]
where $f(x,y) = ((1 - \varphi)(x), x - y)$. If $V$ is a $p$-adic 
representation, we call the maps
\[
  \exp_e: \Ddr(V) \overset{\delta}{\ra} H^1(K, V),~~~
  \exp_f: \Dcris(V) \oplus \Ddr(V) \overset{\delta}{\ra} 
  H^1(K, V)
\]
the \textbf{Bloch-Kato exponential maps}, which are induced by the
connecting homomorphism of Galois cohomology. We set
\[
  H^1_e(K, V) := \trm{Im}(\exp_e),~~~ H^1_f(K, V) := \trm{Im}(\exp_f). 
\]
If $T \subset V$ is a $G_K$-equivariant $\Zp$-lattice in $V$, denote
by $\iota: H^1(K, T) \ra H^1(K, V)$ the canonical map. Set
\[
  H^1_f(K, T) := \trm{Im}(\exp_f)
\]

Recall that the local $L$-function in the case $p = l$ is defined as 
\[
  P(V, X) := \trm{det}_{K}(1 - f \cdot X|~ \Dcris(V)),
\]
where $f$ denotes the $K$-linear map $\varphi^{[K : \Qp]}$.

Assume now that $P(V, 1) \not= 0$. Then (cf.~the first lines in the
proof of Theorem 4.1, \cite{bk93}) $H^1_f(K, V) = H^1_e(K, V)$.

\begin{thm}
  Let $V$ be a crystalline representation, fix a $G_K$-equivariant
  lattice $T$ in $V$, and assume $P(V, 1) \not= 0$. Then:
  \begin{enumerate}
  \item $H^1(\Dexp(T))[1/p] \cong H^1_e(K, V)$.
  \item $H^1(\Dexp(T)) \cong H^1_e(K, T)$.
  \item $\exp_e : \Ddr(V) / \Ddr^0(V) \ra H^1(K, V)$ coincides with the composite map
    \begin{align*}
      (\Dexp(T) / \Dexp^0(T)) [1/p] \overset{1 - \varphi}{\lra} (\Dexp(T) / (1 - \varphi^0) \Dexp(T)) [1/p]\\
      =  H^1(\Dexp(T))[1/p] \cong H^1(K, V),
    \end{align*}
    where the last canonical identification is explained in the proof.
  \end{enumerate}
\end{thm}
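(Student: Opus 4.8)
The plan is to run the exact sequence of the corollary to Proposition~\ref{exseq}, namely $0\lra\Zp\lra\Fil^0\Aexp\overset{1-\varphi}{\lra}\Aexp\lra0$, through $-\otimes_{\Zp}T$ (exactness is preserved, $T$ being free) and through continuous $G_K$-cohomology (using the continuous splitting from that proposition), and then to compare the output with Bloch--Kato's fundamental exact sequences. Writing $\Dexp^0(T)=(\Fil^0\Aexp\otimes T)^{G_K}$, $\Dexp(T)=(\Aexp\otimes T)^{G_K}$ and $\varphi^0=\varphi|_{\Fil^0}$, the long exact sequence begins
\[
0\to H^0(K,T)\to\Dexp^0(T)\xrightarrow{1-\varphi^0}\Dexp(T)\xrightarrow{\partial_T}H^1(K,T)\to H^1(K,\Fil^0\Aexp\otimes T),
\]
so $H^1(\Dexp(T))=\operatorname{coker}(1-\varphi^0)$ is carried by $\partial_T$ isomorphically onto $\ker\big(H^1(K,T)\to H^1(K,\Fil^0\Aexp\otimes T)\big)$. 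Inverting $p$ and invoking Proposition~\ref{dexpdcris} ($\Dexp^0(T)[1/p]=\Fil^0\Dcris(V)$, $\Dexp(T)[1/p]=\Dcris(V)$, compatibly with $\varphi$ and the filtration), the same argument gives $H^1(\Dexp(T))[1/p]=\operatorname{coker}\big(1-\varphi\colon\Fil^0\Dcris(V)\to\Dcris(V)\big)$ and an isomorphism $\partial_V$ of it with $\ker\big(H^1(K,V)\to H^1(K,\Fil^0\Aexp[1/p]\otimes V)\big)=\operatorname{Im}\big(\partial_V\colon\Dcris(V)\to H^1(K,V)\big)$, compatibly with $\iota\colon H^1(K,T)\to H^1(K,V)$.

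Next I would feed in the hypothesis $P(V,1)\neq0$. Since $K=K_0$, the factorisation $1-\varphi^{[K:\Qp]}=(1-\varphi)\big(1+\varphi+\dots+\varphi^{[K:\Qp]-1}\big)$ of $\Qp$-linear endomorphisms of $\Dcris(V)$ shows the hypothesis forces $1-\varphi$ to be bijective on $\Dcris(V)$, hence $\Dcris(V)^{\varphi=1}=0$; the latter already gives $H^1_e(K,V)=H^1_f(K,V)$ and makes $\exp_e\colon\Ddr(V)/\Ddr^0(V)=\Dcris(V)/\Fil^0\Dcris(V)\to H^1_e(K,V)$ an isomorphism (its kernel, the image of $(\Be\otimes V)^{G_K}=\Dcris(V)^{\varphi=1}$, being $0$). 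On the other side, $q\colon\overline c\mapsto[(1-\varphi)c]$ is a well-defined map $\Dcris(V)/\Fil^0\Dcris(V)\to\operatorname{coker}\big(1-\varphi\colon\Fil^0\Dcris(V)\to\Dcris(V)\big)=H^1(\Dexp(T))[1/p]$, surjective because $1-\varphi$ is already onto $\Dcris(V)$ and injective because $(1-\varphi)c\in(1-\varphi)\Fil^0\Dcris(V)$ forces $c\in\Fil^0\Dcris(V)+\Dcris(V)^{\varphi=1}=\Fil^0\Dcris(V)$. This $q$ is the first arrow of the composite in (c).

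The crux is to show $\operatorname{Im}(\partial_V)=H^1_e(K,V)$ and $\partial_V\circ q=\exp_e$, and I would do this by comparing connecting cocycles. For $v\in\Dcris(V)$ choose a lift $\tilde v$ of $v$ along $1-\varphi\colon\Fil^0\Aexp[1/p]\otimes V\twoheadrightarrow\Aexp[1/p]\otimes V$, so $\partial_V(v)=[g\mapsto g\tilde v-\tilde v]$. Although $\Fil^0\Aexp[1/p]\not\subseteq\Bcris$ (indeed it contains $\Atplus\{\pi/p\}$), this particular $\tilde v$ does land in $(\Fil^0\Bcris)\otimes V$: from $\tilde v=v+\varphi(\tilde v)$, the inclusion $\varphi(\Aexp[1/p])\subseteq\Bcris$ of the proof of Proposition~\ref{invaexp}, and $v\in\Dcris(V)\subseteq\Bcris\otimes V$, one gets $\tilde v\in\Bcris\otimes V$, while $\Fil^0\Aexp\subseteq\Fil^0\Bdr=\Bplusdr$ gives $\tilde v\in\Bplusdr\otimes V$. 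Using bijectivity of $1-\varphi$ on $\Dcris(V)$, pick $c\in\Dcris(V)$ with $(1-\varphi)c=-v$; then $s:=\tilde v+c$ satisfies $\varphi(s)-s=(\varphi(\tilde v)-\tilde v)+(\varphi(c)-c)=-v+v=0$, so $s\in(\Bcris\otimes V)^{\varphi=1}=\Be\otimes V$, it reduces modulo $\Bplusdr\otimes V$ to the class $\overline c\in((\Bdr/\Bplusdr)\otimes V)^{G_K}=\Ddr(V)/\Ddr^0(V)$ (because $\tilde v\in\Bplusdr\otimes V$), and $c$ is $G_K$-invariant. Reading off the connecting map of the first fundamental exact sequence in its reduced form $0\to V\to\Be\otimes V\to(\Bdr/\Bplusdr)\otimes V\to0$, this gives $\exp_e(\overline c)=[g\mapsto gs-s]=[g\mapsto g\tilde v-\tilde v]=\partial_V(v)$; since $v=-(1-\varphi)c$ this is exactly $\partial_V\circ q=\exp_e$, up to the sign inherent in the normalisation of the fundamental exact sequence and the connecting map (which I would absorb into the canonical identification $H^1(\Dexp(T))[1/p]\cong H^1(K,V)$, namely $\partial_V$). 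The hard part is precisely this reconciliation of the two lifting problems: it works because an $\Aexp$-lift along $1-\varphi$ of a crystalline class lands automatically in $\Fil^0\Bcris$, and because $1-\varphi$ being bijective on $\Dcris(V)$ — i.e.\ $P(V,1)\neq0$ — produces the corrective element $c$.

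It remains to assemble. Part (a) is $H^1(\Dexp(T))[1/p]\cong\operatorname{Im}(\partial_V)=\operatorname{Im}(\partial_V\circ q)=\operatorname{Im}(\exp_e)=H^1_e(K,V)$, since $q$ is an isomorphism and $\exp_e$ is injective; part (c) is the identity $\partial_V\circ q=\exp_e$ just proved, with the canonical identification being $\partial_V$. For part (b), rerun the first paragraph integrally: $H^1(\Dexp(T))\cong\ker\big(H^1(K,T)\to H^1(K,\Fil^0\Aexp\otimes T)\big)$, and the map on the right factors through $\iota$ (it is induced by $T\hookrightarrow\Fil^0\Aexp\otimes T$, which after inverting $p$ is $T\hookrightarrow V\to\Fil^0\Aexp[1/p]\otimes V=\Fil^0\Aexp\otimes T$), so the kernel equals $\iota^{-1}\operatorname{Im}(\partial_V)=\iota^{-1}H^1_e(K,V)=:H^1_e(K,T)$; the one delicate point — that a class mapping to a torsion element of $H^1(K,\Fil^0\Aexp\otimes T)$ must already be zero there — I would settle by reducing modulo $p^n$, where $\Dexp(T)/p^n\in\underline{\trm{MF}}^{\trm{fd},\trm{tor}}_{\OK}$, and passing to the limit. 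The Corollary then follows from (b) and the equality $H^1_e=H^1_f$, together with the standard computation of the $p$-adic volume of $H^1_e(K,T)$ for the indicated normalisation (as in \cite{bk93}, \S4), which produces $|P(V,1)|_p^{-1}$ via $\det_K(1-\varphi^{[K:\Qp]}\,|\,\Dcris(V))=P(V,1)$.
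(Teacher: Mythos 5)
Your route through parts (a) and (c) is genuinely different from the paper's and, for those two parts, sound. The paper embeds the integral sequence $0 \to \Zp \to \Fil^0\Aexp \to \Aexp \to 0$ into (an $\Aexp[1/p]$-version of) the \emph{second} fundamental exact sequence and reads off $H^1(\Dexp(T))[1/p] \cong H^1_f(K,V) = H^1_e(K,V)$ from the induced map of long exact sequences; you instead compute the connecting map $\partial_V$ at the level of cocycles and match it against the \emph{first} (reduced) fundamental sequence. Your key observation --- that a lift $\tilde v$ of $v \in \Dcris(V)$ along $1-\varphi$ on $\Fil^0\Aexp[1/p]\otimes V$ automatically lands in $(\Bcris\cap\Bplusdr)\otimes V$ because $\tilde v = v + \varphi(\tilde v)$ and $\varphi(\Aexp[1/p])\subset\Bcris$, $\Fil^0\Aexp\subset\Bplusdr$ --- is correct and in fact makes the identification in (c) more explicit than the paper's diagram does; the corrective element $c$ with $(1-\varphi)c=-v$ exists precisely because $P(V,1)\neq 0$ forces $1-\varphi$ to be bijective on $\Dcris(V)$, and the sign discrepancy is indeed only a normalisation of the connecting map. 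So (a) and (c) are fine.

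Part (b) is where you have a genuine gap. You reduce the integral statement to the claim that a class of $H^1(K,T)$ whose image in $H^1(K,\Fil^0\Aexp\otimes T)$ dies in $H^1(K,\Fil^0\Aexp[1/p]\otimes V)$ already dies integrally, i.e.\ essentially to torsion-freeness of (the relevant part of) $H^1(K,\Fil^0\Aexp\otimes T)$ together with the identification of $\ker\bigl(H^1(K,M)\to H^1(K,M[1/p])\bigr)$ with the torsion of $H^1(K,M)$ for the inductive-limit topology on $M[1/p]$. Neither point is addressed by ``reducing modulo $p^n$, where $\Dexp(T)/p^n\in\MF^{\trm{fd},\trm{tor}}_{\OK}$'': that reduction lives in the category of finitely generated Fontaine--Laffaille modules, whereas your delicate point concerns continuous cohomology of the infinite-rank module $\Fil^0\Aexp\otimes T$, and it is not clear how to pass from one to the other. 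The paper avoids this entirely by never leaving finitely generated modules: it uses that $H^\bullet(-)$ is a $\delta$-functor on $\MF^{\trm{fd}}_{\OK}$, applies it to $0\to\Dexp(T)\overset{p}{\to}\Dexp(T)\to\Dexp(T)/p\to 0$, and compares the resulting Bockstein sequence with the one for $T$ via $H^0(\Dexp(T)/p)\cong H^0(K,T/p)$, concluding that $H^1(\Dexp(T))$ is $p$-saturated inside $H^1(K,T)$; combined with (a) this gives $H^1(\Dexp(T))=\iota^{-1}(H^1_e(K,V))$. You would need either to import that comparison for torsion Fontaine--Laffaille modules or to prove the torsion-freeness statement about $H^1(K,\Fil^0\Aexp\otimes T)$ directly; as written, (b) is not established.
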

\begin{proof}
  The proof is similar to \cite{bk93}, Lemma 4.5.
  We have the following commutative diagram:
  \[
\xymatrix{
  0 \ar[r] & \Zp \ar[d] \ar[r] & \Fil^0 \Aexp \ar[r] \ar[d]^{x \mapsto (x,x)} & \Aexp \ar[r] 
  \ar[d]^{x \mapsto (x,0)} & 0 \\
  0 \ar[r] & \Qp \ar[r] & \Aexp[1/p] \oplus \Bplusdr \ar[r] &
  \Aexp[1/p] \oplus \Bdr \ar[r] & 0 }.
\]
Inverting $p$, tensoring with $V$ over $\Qp$ and taking $G_K$-cohomology, we obtain a diagram
\[
\!\!\!\!\!\!\xymatrix@C=8pt{ 0 \ar[r] & H^0(\Dexp(T))[1/p] \ar[r] \ar[d]^{=} & \Dexp^0(T)[1/p] \ar[r] \ar[d] &
  \Dexp(T)[1/p] \ar[d] \ar[r] & H^1(\Dexp(T))[1/p] \ar[r] \ar[d] & 0 \\
  0 \ar[r] & H^0(K, V) \ar[r] & \Dcris(V) \oplus \Ddr^+(V) \ar[r] & \Dcris(V) \oplus \Ddr(V)
  \ar[r] & H^1(K, V) }
\]
with exact rows. Since $\Dexp(T)[1/p] = \Dcris(V) = \Ddr(V)$ and
$\trm{Im}(\exp_f) = \trm{Im}(\exp_e)$, we have the claimed
identification $H^1(\Dexp(T))[1/p] \cong H^1_f(K, V)$.

The exact sequence
\[
  0 \lra \Dexp(T) \overset{p}{\lra} \Dexp(T) \lra \Dexp(T) / p \lra 0
\]
(which exists in $\MF^{\trm{fd}}_{\OK}$, since $\Dexp(T) \in
\MF^{\trm{fd}}_{\OK}$) induces a sequence
\[
  H^0(\Dexp(T) / p) \lra H^1(A \otimes T) \lra H^1(A \otimes T) \lra
  H^1(A \otimes T / p).
\]
Recall that $H^1_f(K, T) = \iota^{-1} (H^1_f(K,V))$, where $\iota :
H^1(K, T) \ra H^1(K, V)$.  Since $H^1(\Dexp(T)) \subset H^1(K, T)$ it
suffices to show that the cokernel of this inclusion does not have any
$p$-torsion. But this follows from the the commutative diagram
\[
\xymatrix{
  H^0(\Dexp(T) / p) \ar[r] \ar[d]^\cong & H^1(A \otimes T) \ar[r]^-{p} \ar[d]^{\subset} &  
  H^1(A \otimes T) \ar[r] \ar[d]^{\subset} & H^1(A \otimes T / p) \ar[d]^{\subset} \\
  H^0(T / p) \ar[r] & H^1(T) \ar[r]^p & H^1(T) \ar[r] & H^1(T / p)  }.
\]
\end{proof}

\begin{cor}
  Let $\mu$ be the Haar-measure on the finite-dimensional $K$-vector
  space $H^1(K, V)$ such that the image of the lattice
  \[
    \Dexp(T) / \Dexp^0(T) \subset \Ddr(V) / \Ddr^0(V) \overset{\sim}{\lra} H^1_e(K, V) = H^1_f(K, V)
  \]
  has measure $1$. Then
  \[
    \mu(H^1_f(K, T)) = | P(V, 1) |_p^{-1}.
  \]
\end{cor}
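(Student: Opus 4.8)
The plan is to reduce the statement to the $p$-adic volume of the $\Qp$-linear automorphism $1-\varphi$ of $\Dcris(V)$, and then to identify that volume with $|P(V,1)|_p$.

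First I would record what $P(V,1)\neq 0$ buys us. Since $P(V,1)=\det_K(1-f\,|\,\Dcris(V))$ with $f=\varphi^{[K:\Qp]}$, a $\varphi$-fixed vector is $f$-fixed, so $\Dcris(V)^{\varphi=1}=0$; hence $1-\varphi$ is injective, therefore bijective, on $\Dcris(V)$ (it is $\sigma$-semilinear with $\sigma$ fixing $\Qp$, hence $\Qp$-linear), and $H^0(K,V)=H^0(K,T)=0$. Feeding $V$ into the $[1/p]$-version and $T$ into the integral version of the exact sequence $0\lra\Zp\lra\Fil^0\Aexp\overset{1-\varphi}{\lra}\Aexp\lra 0$ and taking $G_K$-cohomology then produces the exact sequences
\begin{gather*}
0\lra\Fil^0\Dcris(V)\overset{1-\varphi}{\lra}\Dcris(V)\lra W\lra 0,\\
0\lra\Dexp^0(T)\overset{1-\varphi}{\lra}\Dexp(T)\lra H^1_f(K,T)\lra 0,
\end{gather*}
with $W:=H^1(\Dexp(T))[1/p]$; here I use Proposition~\ref{dexpdcris} for $\Dexp(T)[1/p]=\Dcris(V)$ and $\Dexp^0(T)[1/p]=\Fil^0\Dcris(V)$ compatibly with $\varphi$, and the Theorem for $W=H^1_f(K,V)$ and $H^1(\Dexp(T))=H^1_f(K,T)$. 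By part~(3) of the Theorem the isomorphism $\exp_e\colon\Ddr(V)/\Ddr^0(V)=\Dcris(V)/\Fil^0\Dcris(V)\overset{\sim}{\lra}W$ is the one induced by $1-\varphi$; consequently the normalizing lattice is the image in $W$ of $(1-\varphi)\Dexp(T)$, while $H^1_f(K,T)$ is the image in $W$ of $\Dexp(T)$ together with its torsion subgroup $H^1(\Dexp(T))_{\mathrm{tor}}$.

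Next comes the volume bookkeeping. Fix Haar measures on $\Fil^0\Dcris(V)$ and $\Dcris(V)$ compatible with $\mu$ along the first exact sequence above, and write $\mathrm{vol}$ for the resulting measures of lattices, where $\mu$ of a finitely generated module is read as the measure of its image in $W$ times the order of its torsion. Multiplicativity of $p$-adic volumes in short exact sequences, applied to the second exact sequence (a subobject of the first), gives $\mu(H^1_f(K,T))=\mathrm{vol}(\Dexp(T))/\mathrm{vol}(\Dexp^0(T))$. Applying the same multiplicativity to the lattice $(1-\varphi)\Dexp(T)\subset\Dcris(V)$ — whose intersection with $(1-\varphi)\Fil^0\Dcris(V)$ is $(1-\varphi)\big(\Dexp(T)\cap\Fil^0\Dcris(V)\big)$ — together with the normalization $\mu(L)=1$ for $L$ the image of $(1-\varphi)\Dexp(T)$ in $W$, yields $\mathrm{vol}\big((1-\varphi)\Dexp(T)\big)=\mathrm{vol}\big(\Dexp(T)\cap\Fil^0\Dcris(V)\big)$. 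Since $\mathrm{vol}\big((1-\varphi)\Dexp(T)\big)=|\det_{\Qp}(1-\varphi)|_p\cdot\mathrm{vol}(\Dexp(T))$ (determinant of the $\Qp$-linear endomorphism $1-\varphi$ of $\Dcris(V)$), combining these gives
\[
\mu\big(H^1_f(K,T)\big)=\frac{\big[\,\Dexp(T)\cap\Fil^0\Dcris(V):\Dexp^0(T)\,\big]}{|\det_{\Qp}(1-\varphi)|_p}.
\]
The index in the numerator equals $1$: by construction $\Dexp^0(T)=(\Fil^0\Aexp\otimes_{\Zp}T)^{G_K}$, and $\Fil^0\Aexp=\Aexp\cap\Bplusdr$ inside $\Aexp[1/p]$, so $\Dexp^0(T)$ is saturated in $\Dexp(T)$, i.e. $\Dexp^0(T)=\Dexp(T)\cap\Fil^0\Dcris(V)$.

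Finally I would prove $|\det_{\Qp}(1-\varphi)|_p=|P(V,1)|_p$, in fact $\det_{\Qp}(1-\varphi\,|\,\Dcris(V))=P(V,1)$: over $\overline{\Qp}$ the idempotents of $K_0\otimes_{\Qp}\overline{\Qp}$ split $\Dcris(V)\otimes_{\Qp}\overline{\Qp}$ into $[K:\Qp]$ subspaces permuted cyclically by $\varphi$, so an eigenvalue $\lambda$ of $\varphi$ satisfies $\lambda^{[K:\Qp]}=\beta$ for an eigenvalue $\beta$ of $f=\varphi^{[K:\Qp]}$ on one fixed factor; grouping the $[K:\Qp]$-th roots gives $\prod_\lambda(1-\lambda)=\prod_\beta(1-\beta)=\det_K(1-f\,|\,\Dcris(V))=P(V,1)$ (which also re-proves $P(V,1)\in\Qp$). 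Hence $\mu(H^1_f(K,T))=|P(V,1)|_p^{-1}$, as claimed. The step I expect to be the main obstacle is the integral bookkeeping — verifying $\Dexp^0(T)=\Dexp(T)\cap\Fil^0\Dcris(V)$ (equivalently $\Fil^0\Aexp=\Aexp\cap\Bplusdr$) and correctly accounting for the torsion of $H^1(\Dexp(T))$ (which vanishes when $H^0(K,V/T)=0$) — rather than the determinant identity or the routine multiplicativity of Haar measures.
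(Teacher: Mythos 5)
Your overall route is exactly the one the paper intends: its entire proof of this corollary is the single sentence ``this follows from the definition of $P(V,X)$ and parts b), c) of the previous theorem,'' and your volume bookkeeping (reading $\mu$ of a finitely generated module as the measure of its image times the order of its torsion, using multiplicativity along the two exact sequences, and the identity $\det_{\Qp}(1-\varphi\,|\,\Dcris(V))=\det_K(1-\varphi^{[K:\Qp]}\,|\,\Dcris(V))=P(V,1)$ for the $\sigma$-semilinear $\varphi$) is the correct and essentially forced elaboration of that sentence. The determinant identity and the reduction to the index $[\Dexp(T)\cap\Fil^0\Dcris(V):\Dexp^0(T)]$ divided by $|\det_{\Qp}(1-\varphi)|_p$ are both fine.

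The one place where you assert rather than prove is the claim $\Fil^0\Aexp=\Aexp\cap\Bplusdr$, which you use to get that the index above is $1$. This is not ``by construction'': $\Fil^i\Aexp$ is defined as $\bigcup_{i+k\ge 0}(p/t)^k\Fil^{i+k}_p\Atplus\{\pi/p\}$, and $\Fil^{j}_p$ carries the \emph{extra} condition $\varphi(x)\in p^{j}\Atplus\{\pi/p\}$ on top of membership in $\Fil^j\Bdr$. So the equality you want amounts to showing that this Frobenius-divisibility condition is automatic for elements of $\Atplus\{\pi/p\}\cap t^j\Bplusdr$ (equivalently, that $\Fil^j_p\Atplus\{\pi/p\}=\Fil^j\Atplus\{\pi/p\}$, which one can try to extract from the description of $\Fil^j_p\Atplus\{\pi/p\}$ as the $p$-adic closure of $\Atplus[\xi^a(t/p)^b;\,a+b\ge j]$ in the proof of Proposition \ref{exseq}, together with $\ker\theta=(\xi)$). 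You correctly identify this as the main obstacle, but as written it is a gap; the paper itself offers no help here, since it gives no argument at all. Everything else in your write-up I would accept as is.
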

\begin{proof}
  This follows from the definition of $P(V, X)$ and the b), c) from the previous theorem.
\end{proof}

\bibliography{local} \bibliographystyle{plain}

Andreas Riedel

Universit\"at Heidelberg, Mathematisches Institut

Im Neuenheimer Feld, 205

69115 Heidelberg, Germany

\vspace{2mm}

Email: \texttt{ariedel@mathi.uni-heidelberg.de}

Phone: 0049 6221 14231

\end{document}